\numberwithin{equation}{section}
\newtheorem{theorem}{Theorem}[section]
\newtheorem{prop}[theorem]{Proposition}
\newtheorem{lemma}[theorem]{Lemma}
\newtheorem{cor}[theorem]{Corollary}
\theoremstyle{definition}
\newtheorem{definition}[theorem]{Definition}
\newtheorem{example}[theorem]{Example}
\newtheorem{remark}[theorem]{Remark}
\newtheorem{question}[theorem]{Question}
\newcommand{\D}{\Delta}
\def\<{{\langle}}
\def\>{{\rangle}}
\def\G{{\Gamma}}
\def\b{{\beta}}
\def\g{{\gamma}}
\def\d{{\delta}}
\def\Z{\mathbb Z}
\def\R{\mathbb R}
\def\T{{\mathbb T}}
\def\S{{\mathbb S}}
\def\D{\cal D}
\def\b{\beta}
\def\si{\sigma}
\def\t{\tau}
\def\k{{\kappa}}
\def\Gr{\Bbb G}
\def\L{\cal L}
\def\La{\Lambda}
\def\Rd{{\cal R}_d}
\def\s{{\bf s}}
\def\e{\epsilon}
\def\De{D}
\def\o{\overline}
\def\ni{\noindent} 
\begin{document}

\title{Graph Complexity and Mahler Measure}

\author{Daniel S. Silver 
\and Susan G. Williams\thanks {The authors are grateful for the support of the Simons Foundation.} }

\maketitle 


\begin{abstract} The \emph{(torsion) complexity} of a finite edge-weighted graph is defined to be the order of the torsion subgroup of the abelian group presented by its Laplacian matrix. When $G$ is $d$-periodic (i.e., $G$ has a free $\Z^d$-action by graph automorphisms with finite quotient) the Mahler measure of its Laplacian determinant polynomial is the growth rate of the complexity of  finite quotients of $G$. Lehmer's question, an open question about the roots of monic integral polynomials, is equivalent to a question about the complexity growth of edge-weighted 1-periodic graphs.   \bigskip

MSC: 05C10, 37B10, 57M25, 82B20
\end{abstract}

\section{Introduction.} \label{Intro} 
The complexity of a finite graph is often defined as the number of its spanning trees.  Here we consider graphs with integer edge weights and take a different approach, defining  complexity to be the order of the torsion subgroup of the abelian group presented by the Laplacian matrix of the graph. When $G$ is connected and all edge weights are $1$, the complexity as we define it is the number of spanning trees of the graph. However, for general  edge-weighted graphs, the two notions of complexity are different. 

Our main objects of study are \emph{$d$-periodic graphs}, infinite graphs $G$ on which $\Z^d, \ d \ge 1$, acts freely by graph automorphisms such that integer edge weights are preserved and the quotient graph $\o G$ is finite. Our motivation comes from two sources:  knot theory, where finite graphs with edge weights $\pm 1$ correspond to diagrams of knots and links, and Lehmer's question concerning the roots of integral polynomials. 

For $d$-periodic graphs a Laplacian operator is defined (see \cite{Ke11}), described by a matrix with variables $x_1^{\pm 1}, \ldots, x_d^{\pm 1}$ and denoted here by $L_G$. We call its determinant the \emph{Laplacian (determinant) polynomial} $D_G = D_G(x_1, \ldots, x_d)$. (For finite graphs the term Laplacian polynomial is often used for the characteristic polynomial of the integer Laplacian matrix.) When $D_G \ne 0$, we use the main result of \cite{LSW90} to characterize the Mahler measure of $D_G$ as the complexity growth rate of the finite quotient graphs lying between $G$ and $\o G$. When all edge weights of $G$ are $1$, we recover a consequence of a more general result of Lyons \cite{Ly05} (see also \cite{BL12}). We show that Lehmer's question is equivalent to a question about graph complexity growth rates of 1-periodic graphs with edge weights equal to $\pm 1$.  

A 1-periodic \emph{plane graph} $G$ (that is, a graph embedded in the plane) determines an infinite link by the medial construction.  Its quotient by the $\Z$-action can be regarded as a (finite) link in an unknotted thickened annulus. The Alexander polynomial of the complement determines the Laplacian polynomial of $G$. We follow with some speculations about Lehmer's question and links.  

In the last section we present useful results about unweighted $d$-periodic graphs. They will not surprise some experts. However, as far as we know they do not appear in previous literature. 

  \bigskip

\ni {\bf Acknowledgements.} It is the authors' pleasure to thank Abhijit Champanerkar, Eriko Hironaka, Matilde Lalin and Chris Smythe for helpful comments and suggestions. 

\section{Complexity of finite graphs.}

%
%
%
%
%
%

Consider a finite graph $G$ with vertex set $V(G)=\{v_1, \ldots, v_n\}$ and edge set $E(G)=\{e_1, \ldots, e_m\}$. The graph  is allowed to have multiple edges. Loops will not affect affect results here and can be ignored. We assume also that the edges $e \in E(G)$ have \emph{weights} $w_e \in \Z$. (Generally $w_e$ will be $1$ or $-1$.) The graph $G$ is \emph{unweighted} if every weight is $1$.

The \emph{adjacency matrix} of $G$ is the $n \times n$ matrix 
$A = (a_{i, j})$ such that $a_{i, j}$ is the sum of the weights of edges between $v_i$ and $v_j$, for $i \ne j$. Diagonal entries of $A$ are zero. Define $\d= (\d_{i, j})$ to be the $n \times n$ diagonal matrix with $\d_{i,i}= \sum_j a_{i,j}.$

\begin{definition} \label{complexity} The \emph{Laplacian matrix} $L_G$ of a finite graph $G$ is $\d - A$. The abelian group presented by $L_G$ is the \emph{Laplacian group} of $G$, denoted by ${\L}_G$. The \emph{(torsion) complexity} $\k_G$ is the order of the torsion subgroup $T{\L}_G$
\end{definition}

When $G$ is connected and unweighted, the nullity of the Laplacian matrix $L_G$ is equal to 1 (see \cite{GR01}), and hence the Laplacian group ${\L}_G$ decomposes as the direct sum of $\Z$ and the torsion subgroup $T{\L}_G$. In this case, the Matrix Tree Theorem \cite{Tu84} implies that $\k_G$ is equal to the number of spanning trees of $G$.

More generally we  define \emph{tree complexity}  $\t_G$ of a connected graph $G$ by
\begin{equation*} \t_G =\bigg \vert \sum_{T} \prod_{e \in E(T)} w_e\bigg \vert, \end{equation*}
where the summation is taken over all spanning trees of $G$. If $G$ not connected, then we define $\t_G$ to be the product of the tree complexities of its connected components. Again by \cite{Tu84}, we have $\t_G = \k_G$ if and only if $\t_G$ is nonzero; for connected $G$ this common value is equal to any $(n-1) \times (n-1)$ principal minor of $L_G$. However,  the following example shows that $\t_G$ can vanish while $\k_G$ is positive.

\begin{figure}
\begin{center}
\includegraphics[height=2 in]{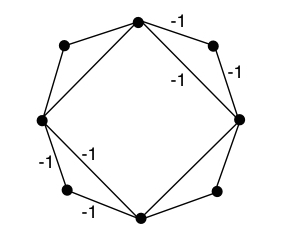}
\caption{Graph $G$ with $\t_G=0$ and $\k_G = 9$}
\label{milnor}
\end{center}
\end{figure}

\begin{example}  Consider the connected graph $G$ in Figure \ref{milnor}. Unlabeled edges here and throughout will be assumed to have weight $1$. The Laplacian matrix $L_G$ is square of size $8$. (See Example \ref{circ} below for a quick way to find $L_G$.) A routine calculation shows that any principal $7 \times 7$ minor of $L_G$ vanishes, and hence $\t_G=0$.   However, the absolute value of the greatest common divisors of the $6 \times 6$ minors of $L_G$ is 9. Hence $\k_G =9$. 
\end{example}

%
%
%
%
%
%
%


%
%

\section{$d$-Periodic graphs.} We regard $\Z^d$ as the multiplicative abelian group freely generated by $x_1, \ldots, x_d$. We denote the Laurent polynomial ring $\Z[\Z^d] = \Z[x_1^{\pm 1}, \ldots, x_d^{\pm 1}]$ by $\Rd$. As an abelian group $\Rd$ is generated freely by monomials $x^\s = x_1^{s_1} \ldots x_d^{s_d}$, where $\s = (s_1, \cdots, s_d) \in \Z^d$. 

Let $G$ be a graph that is  \emph{$d$-periodic}. By this we mean that $G$ has a cofinite free $\Z^d$-action by automorphisms that preserves edge weights. (By \emph{cofinite} we mean the quotient  graph $\o G$ is finite. The action is \emph{free} if the stabilizer of any edge or vertex is trivial.) Such a graph $G$ is necessarily locally finite. The vertex set $V(\o G)$ and the edge set $E( \o G)$ consist of finitely many vertex orbits $\{v_{1, \s}\mid \s \in \Z^d\}, \ldots, \{v_{n,\s}\mid \s \in \Z^d\}$ and weighted-edge orbits $\{e_{1, \s}\mid \s \in \Z^d\}, \ldots, \{e_{m, \s}\mid \s \in \Z^d\}$, respectively. The $\Z^d$-action is determined by 
\begin{equation} x^{\s'} \cdot v_{i, \s} = v_{i, \s+ \s'}, \quad \quad   x^{\s'} \cdot e_{j, \s} = e_{j, \s+ \s'},\end{equation}
where $1\le i \le n,\ 1 \le j \le m$ and $\s, \s' \in \Z^d$. (When $G$ is embedded in some Euclidean space with $\Z^d$ acting by translation, it is usually called a \emph{lattice graph}. Such graphs arise frequently in physics, for example in studying crystal structures.)

When $d>1$ we can think of $G$ as covering a graph $\o G$ in the $d$-torus $\T^d = \R^d/\Z^d$.  When $d=1$, $G$ covers a graph $\o G$ in the annulus $\S^1 \times I$. In either case the cardinality  $|V(\o G)|$ is equal to the number $n$ of vertex orbits of $G$, while $|E(\o G)|$ is the number $m$ of edge orbits. The projection map is given by $v_{i, \s} \mapsto v_i$ and $e_{j, \s} \mapsto e_j$.

If $\La \subset \Z^d$ is a subgroup, then the intermediate covering graph in $\Bbb R^d/\La$ will be denoted by $G_\La$.  The subgroups $\La$ that we will consider have index $r < \infty$, and hence $G_\La$ will be a finite $r$-sheeted cover of $\o G$ in the $d$-dimensional torus  $\Bbb R^d/\La$.

Given a $d$-periodic graph $G$, the Laplacian matrix is defined to be the $n \times n$-matrix $L_G = \d - A$, where now $A = (a_{i,j})$ is the \emph{weighted adjacency matrix} with each non-diagonal entry $a_{i, j}$  equal to the sum of  monomials $c_e x^\s$ for each edge $e \in E( \o G)$ between $v_{i, \bf{0}}$ and $v_{j, \s}$. (Again, ignoring loops, each diagonal entry of $A$ is zero.) The matrix $\d=(\d_{i,j})$ is the same diagonal matrix that we associate to $\o G$. 

The matrix $L_G$ presents a finitely generated $\Rd$-module, the \emph{Laplacian module} of $G$.
The \emph{Laplacian (determinant) polynomial} $\De_G$ is the determinant of $L_G$. Examples can be found in \cite{LSW14}.

The following is a consequence of the main theorem of \cite{Fo93}. It is made explicit in Theorem 5.2 of \cite{Ke11}. 

\begin{prop}\label{poly} \cite{Ke11} Let $G$ a $d$-periodic graph. Its Laplacian polynomial has the form \begin{equation}\label{polys} \De_G = \sum_F\  \prod_{e \in E(F)} c_e \prod_{\rm Cycles\ of\ F} (2-w -w^{-1}),\end{equation} where  the sum is over all cycle-rooted spanning forests $F$ of $\o G$, and $w, w^{-1}$ are the monodromies of the two orientations of the cycle. \end{prop}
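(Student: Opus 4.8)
The plan is to derive \eqref{polys} from the Cauchy--Binet formula applied to a ``twisted'' factorization of the Laplacian, reconstructing the argument behind \cite{Fo93} and \cite{Ke11}. First I would exhibit $L_G$ as a product of incidence-type matrices over $\Rd$. Orient each edge $e$ of $\o G$, say from $v_i$ to $v_j$, and record the shift $\s = \s(e) \in \Z^d$ for which $e$ lifts to an edge from $v_{i,\mathbf 0}$ to $v_{j,\s}$. Let $\partial$ be the $n \times m$ matrix whose column indexed by $e$ has entry $+1$ in row $i$ and entry $-x^{\s(e)}$ in row $j$ (and $0$ elsewhere), let $C = \mathrm{diag}(c_e)$, and let $\o\partial$ be obtained from $\partial$ by the involution $x_k \mapsto x_k^{-1}$. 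A direct check of entries then shows $L_G = \o\partial\, C\, \partial^{\,T}$: the diagonal entry at $v_i$ collects $\sum_{e \ni i} c_e = \delta_{ii}$ (each incident edge contributing $c_e$, since $(-x^{-\s})(-x^{\s}) = 1$), while the $(i,j)$ entry collects exactly $-a_{ij}$, the negative of the weighted adjacency monomials.

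Next I would apply the Cauchy--Binet formula to this product. Since $\partial$ is $n \times m$, it yields
\begin{equation*}
\De_G = \det\bigl(\o\partial\, C\, \partial^{\,T}\bigr) = \sum_{S} \Bigl(\prod_{e \in S} c_e\Bigr)\, \det(\partial_S)\,\det(\o\partial_S),
\end{equation*}
the sum ranging over $n$-element subsets $S$ of $E(\o G)$, where $\partial_S$ is the $n \times n$ submatrix on the columns of $S$. The combinatorial heart of the proof is to identify which $S$ contribute. Grouping rows and columns of $\partial_S$ by the connected components of the subgraph $(V,S)$ makes $\partial_S$ block diagonal, with a block of size $n_i \times m_i$ for each component having $n_i$ vertices and $m_i$ edges. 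Because $\sum n_i = \sum m_i = n$ while connectivity forces $m_i \ge n_i - 1$, some block is non-square (hence rank-deficient) unless $m_i = n_i$ for every $i$; that is, $\det(\partial_S) = 0$ unless every component carries exactly one cycle, i.e.\ $S = E(F)$ for a cycle-rooted spanning forest $F$ of $\o G$.

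Finally I would compute the surviving blocks. Expanding each block repeatedly along its degree-one vertices strips away the tree edges, leaving the incidence matrix of the single cycle of the component; a stripped leaf entry is $+1$ or $-x^{\pm\s}$, and since $\det(\o\partial_S)$ supplies the reciprocal monomials with the same signs, all tree contributions cancel in the product $\det(\partial_S)\det(\o\partial_S)$. For a pure cycle, the block is $I - N$ with $N$ a weighted cyclic shift whose product of weights is the monodromy $w = \prod x^{\s_i}$, so its characteristic polynomial is $\lambda^\ell - w$ and $\det(\partial_{\mathrm{cyc}}) = 1 - w$, while $\det(\o\partial_{\mathrm{cyc}}) = 1 - w^{-1}$. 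Multiplying over components gives $\det(\partial_S)\det(\o\partial_S) = \prod_{\text{cycles of }F}(1-w)(1-w^{-1}) = \prod_{\text{cycles}}(2 - w - w^{-1})$, which together with the edge-weight factor is exactly \eqref{polys}. I expect the main obstacle to be precisely the bookkeeping of this last step: verifying that the leaf-stripping signs and the choice of cycle orientation cancel correctly between $\partial$ and its conjugate, so that only the orientation-symmetric factor $2 - w - w^{-1}$ survives.
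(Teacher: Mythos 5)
Your proof is correct, but note that the paper itself offers no argument for Proposition \ref{poly}: the statement is quoted, with the proof deferred to Theorem 5.2 of \cite{Ke11}, which in turn rests on \cite{Fo93}. What you have written is essentially a reconstruction of that cited proof: the factorization $L_G = \o\partial\, C\, \partial^{\,T}$, Cauchy--Binet, the block-diagonal rank count showing that only cycle-rooted spanning forests survive, and the per-cycle evaluation $(1-w)(1-w^{-1}) = 2-w-w^{-1}$ are exactly the Forman--Kenyon mechanism, so your route and the intended one coincide. Two small points deserve explicit treatment. First, an edge of $\o G$ may be a loop of the quotient (both endpoints in the same vertex orbit, $i=j$, with shift $\s \ne \mathbf{0}$); its column of $\partial$ must then be read as the single entry $1 - x^{\s}$ in row $i$, and such an edge forms a cycle of length one contributing $(1-x^{\s})(1-x^{-\s}) = 2 - w - w^{-1}$. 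This case cannot be ignored: in Example \ref{circulant} every edge of $\o G$ is of this type, and these columns are also what make the diagonal check of your factorization come out right there. Second, the orientation bookkeeping you flag as the main obstacle is disposed of in one line: reversing the chosen orientation of an edge $e$ replaces its column of $\partial$ by $-x^{-\s(e)}$ times itself, and the corresponding column of $\o\partial$ by $-x^{\s(e)}$ times itself, so the product $\det(\partial_S)\det(\o\partial_S)$ is invariant under reorientation. One may therefore orient each surviving component's unique cycle consistently, after which your $I-N$ computation goes through verbatim, and the leaf-stripping entries cancel against their conjugates exactly as you say (the cofactor signs occur in both factors, hence contribute $+1$).
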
 

A \emph{cycle-rooted spanning forest} (CRSF) of $\o G$  is a subgraph of $G$  containing all of $V$ such that each connected component has exactly as many vertices as edges and therefore has a unique cycle. The element $w$ is the monodromy of the cycle, or equivalently, its homology in $H_1(\T^d; \Z) \cong \Z^d$.
See \cite{Ke11} for details. 

A $d$-periodic graph need not be connected. In fact, it can have countably many connected components. Nevertheless, the number of  $\Z^d$-orbits of components, henceforth called \emph{component orbits}, is necessarily finite. 

\begin{prop} \label{components}  If $G$ is a $d$-periodic graph with component orbits $G_1, \ldots, G_t$, then \break $\De_G = \De_{G_1}\cdots \De_{G_t}$. \end{prop}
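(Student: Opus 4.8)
The plan is to exhibit $L_G$, after a suitable ordering of the vertex orbits, as a block-diagonal matrix whose diagonal blocks are exactly the Laplacian matrices $L_{G_1}, \ldots, L_{G_t}$, and then to take determinants. First I would check that each component orbit $G_i$ is itself a $d$-periodic graph: by definition $G_i$ is a single $\Z^d$-orbit of connected components of $G$, hence a $\Z^d$-invariant subgraph on which $\Z^d$ acts freely by weight-preserving automorphisms, and its quotient $\o{G_i} = G_i/\Z^d$ is finite. Thus the Laplacian matrix $L_{G_i}$ and the polynomial $\De_{G_i}$ are defined.

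Next I would identify $\o{G_1}, \ldots, \o{G_t}$ with the connected components of $\o G$. The projection $\pi\colon G \to \o G$ is $\Z^d$-invariant and carries connected sets to connected sets, so each $\o{G_i}$ is connected; and if the images of $G_i$ and $G_j$ shared a vertex, then some $\Z^d$-translate would identify a component of $G_i$ with a component of $G_j$, forcing $i = j$. Hence $\o G = \o{G_1} \sqcup \cdots \sqcup \o{G_t}$ is the decomposition of $\o G$ into connected components, and in particular the $n$ vertex orbits and $m$ edge orbits are partitioned among the $\o{G_i}$.

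Now order the vertex orbits so that those lying in $\o{G_1}$ come first, then those in $\o{G_2}$, and so on. With this ordering I claim $L_G$ is block diagonal. An off-diagonal entry $a_{i,j}$ of the weighted adjacency matrix is a sum of monomials $c_e x^\s$ over edges of $\o G$ joining $v_i$ to $v_j$; if $v_i$ and $v_j$ lie in distinct components $\o{G_p}$ and $\o{G_q}$, there are no such edges, so $a_{i,j} = 0$. The diagonal matrix $\d$ contributes only to the diagonal blocks, and each $\d_{i,i}$ is a weighted degree, a quantity internal to the component containing $v_i$. Consequently the $p$-th diagonal block of $L_G$ coincides precisely with $L_{G_p}$, while every off-diagonal block vanishes.

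Finally I would invoke multiplicativity of the determinant over block-diagonal matrices to conclude $\De_G = \det L_G = \prod_p \det L_{G_p} = \De_{G_1} \cdots \De_{G_t}$. The argument is essentially bookkeeping; the one point demanding care is the identification of the component orbits of $G$ with the connected components of $\o G$, together with the consequent verification that adjacency entries linking distinct components vanish — that is, confirming the block structure is genuine and that no monomial $c_e x^\s$ secretly couples two components.
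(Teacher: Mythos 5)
Your proof is correct and follows essentially the same route as the paper: after a suitable relabeling of vertex orbits, $L_G$ is block diagonal with blocks $L_{G_1},\ldots,L_{G_t}$, and the result follows from multiplicativity of the determinant. The paper states this in one line; your additional bookkeeping (identifying the $\o{G_i}$ with the connected components of $\o G$ and checking that no adjacency entry couples distinct components) is exactly the verification the paper leaves implicit.
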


\begin{proof} After suitable relabeling, the Laplacian matrix for $G$ is a  block diagonal matrix with diagonal blocks equal to the 
Laplacian matrices for $G_1, \ldots, G_s$. The result follows immediately. 
\end{proof}

\begin{prop} \label{zeropoly} Let $G$ a $d$-periodic graph. Its Laplacian polynomial $\De_G$ is identically zero if $G$ contains a closed component. The converse statement is true when $G$ is unweighted.
\end{prop}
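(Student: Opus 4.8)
The plan is to treat the two implications separately, using the cycle-rooted-spanning-forest (CRSF) expansion of Proposition \ref{poly} as the main tool, and to read ``closed component'' as a connected component $\o C$ of $\o G$ all of whose cycles have trivial monodromy in $H_1(\T^d;\Z)\cong\Z^d$ (equivalently, a component that lifts to finite, hence compact, components of $G$). Throughout I would write each term of (\ref{polys}) as $\prod_{e\in E(F)}c_e\prod_{\text{cycles } c}(2-w_c-w_c^{-1})$, where $w_c=x^{\s_c}$ records the monodromy $\s_c\in\Z^d$ of the cycle $c$.

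For the forward direction (closed component $\Rightarrow \De_G\equiv 0$), which holds for arbitrary weights, suppose $\o G$ has a closed component $\o C$. Any CRSF $F$ of $\o G$ restricts on $\o C$ to a CRSF of $\o C$, since $\o C$ is a full connected component and no edges leave it; as every component of a CRSF carries a unique cycle, $F$ contains at least one cycle lying inside $\o C$. That cycle has trivial monodromy, so $w_c=1$, its factor $2-w_c-w_c^{-1}$ vanishes, and the whole term is $0$ (and if $\o C$ carries no cycle at all, e.g.\ it is a tree or an isolated vertex, then $\o G$ has no CRSF and the sum in (\ref{polys}) is empty, so again $\De_G\equiv 0$). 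Alternatively, I could give a gauge argument: closedness says the $\Z^d$-valued offset cochain on $\o C$ has zero periods, hence is a coboundary $\s_e=\phi_j-\phi_i$, so conjugating $L_G$ by $\mathrm{diag}(x^{\phi_i})$ turns the block of $\o C$ into the ordinary integer Laplacian of the finite graph $\o C$, whose row sums vanish; its determinant, and therefore $\De_G$, is then zero.

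For the converse (unweighted $G$, $\De_G\equiv 0 \Rightarrow$ closed component) the key point is that positivity forbids cancellation. Evaluating on the unit torus $x_k=e^{i\theta_k}$ and using $c_e=1$, each factor becomes $2-w_c-w_c^{-1}=2-2\cos(\s_c\cdot\theta)\ge 0$, so every CRSF term is nonnegative and $\De_G(e^{i\theta})\ge 0$. Since a Laurent polynomial vanishing on all of $\T^d$ is zero, and a finite sum of nonnegative functions vanishes identically only if each summand does, $\De_G\equiv 0$ forces every CRSF of $\o G$ to contain a null-homologous cycle. I would then prove the contrapositive of what is wanted: if $G$ has no closed component, every connected component $\o C$ of $\o G$ has nontrivial monodromy subgroup, so for any spanning tree $T$ of $\o C$ the fundamental cycles generate the cycle space and hence their monodromies generate that subgroup; therefore some edge $e_0\notin T$ gives a fundamental cycle of nontrivial monodromy, and $T\cup\{e_0\}$ is a spanning unicyclic CRSF of $\o C$ whose unique cycle winds. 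Taking the union of such subgraphs over the finitely many components of $\o G$ yields a CRSF $F$ all of whose cycles wind, whose term $\prod_{\text{cycles } c}(2-2\cos(\s_c\cdot\theta))$ (all $\s_c\ne 0$) is not identically zero; by the nonnegativity of all terms, $\De_G(e^{i\theta})$ is bounded below by this term, so $\De_G\not\equiv 0$.

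I expect the main obstacle to be the converse, and precisely the no-cancellation step: it is exactly the positivity of $2-2\cos$ on the unit torus — available only because $c_e=1$ — that prevents the contributing CRSF terms from summing to zero, and this is where the unweighted hypothesis is indispensable, since for weights $c_e=\pm1$ the sign $\prod_e c_e$ can be negative and genuine cancellation can occur. The secondary point to get right is the combinatorial construction of a globally winding CRSF, whose engine is the standard fact that the fundamental cycles of a spanning tree generate $H_1$, so a nonzero monodromy subgroup guarantees a winding fundamental cycle.
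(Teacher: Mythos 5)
Your proof is correct and takes essentially the same approach as the paper: both directions rest on the CRSF expansion of Proposition \ref{poly}, with trivial monodromy killing every summand in the forward direction, and a CRSF all of whose cycles wind, combined with nonnegativity of the unweighted summands, ruling out cancellation in the converse. The only cosmetic differences are that the paper routes the forward direction through the component factorization of Proposition \ref{components}, and states the no-cancellation step in terms of constant coefficients (nonnegative for every summand, positive for the constructed one), which is equivalent to your pointwise bound on the torus since the constant coefficient is exactly the torus integral of the summand.
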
 

\begin{proof} If $G$ contains a closed component, then some component orbit $G_i$ consists of closed components.  We have $\De_{G_i}=0$ by \ref{poly}, since all cycles of $\overline{G_i}$ have monodromy 0. By Proposition \ref{components}, $\De_G$ is identically zero. 

Conversely, assume $G$ is unweighted and no component is closed. Each component of $\o G$ must contain a nontrivial cycle. We can extend this collection of cycles to a cycle rooted spanning forest $F$ with no additional cycles.  The corresponding summand  in \ref{poly} has positive constant coefficient. Since every summand has nonnegative constant coefficient,  $\De_G$ is not identically zero.

\end{proof}

%
%
%
%
%
%
%
%
%
%

\begin{definition} \label{mahler} The \emph{Mahler measure} of a nonzero polynomial 
$f(x_1, \ldots, x_d) \in \Rd$ is 
\begin{equation*} M(f) =\exp \int_0^1 \ldots \int_0^1 \log|f(e^{2\pi i \theta_1}, \ldots, e^{2\pi i \theta_d})| d\theta_1 \cdots d\theta_d. \end{equation*}

\end{definition} 

\begin{remark} (1)  The integral in Definition \ref{mahler} can be singular, but nevertheless it converges. 
(See \cite{EW99} for two different proofs.)  If $u_1, \ldots, u_d$ is another basis for $\Z^d$, then $f(u_1, \ldots, u_d)$ has the same logarithmic Mahler measure as $f(x_1, \ldots, x_d)$.  \smallskip

(2) When $d=1$, Jensen's formula shows that $M(f)$ can be described in a simple way. If $f(x) = c_s x^s+ \cdots c_1 x + c_0$, $c_0c_s \ne 0$, then
\begin{equation*} M(f) = |c_s| \prod_{i=1}^s \max\{\log |\lambda_i|, 1\},\end{equation*}
where $\lambda_1, \ldots, \lambda_s$ are the roots of $f$. \smallskip

(3) If $f, g \in \Rd$, then $M(fg) = M(f)M(g)$. Moreover, $M(f) =1$ if and only if $f$ is a unit or a unit times a product of 1-variable cyclotomic polynomials, each evaluated at a monomial of $\Rd$
(see \cite{Sc95}).

\end{remark}

\begin{theorem}(cf. \cite{Ly05}) \label{limit} If $G$ is a $d$-periodic graph with nonzero Laplacian polynomial $\De_G$,  then  
\begin{equation}  \limsup_{\langle \La \rangle \to \infty} \frac{1}{|\Z^d/\La|} \log \k_{G_\La}=
\log M(\De_G), \end{equation}
where $\La$ ranges over all finite-index subgroups of $\Z^d$, and $\langle \La \rangle$  denotes the minimum length of a nonzero vector in $\La$. When $d=1$,  the limit superior can be replaced by an ordinary limit. 
\end{theorem}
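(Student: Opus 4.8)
The plan is to reduce the computation of $\k_{G_\La}$ to a periodic-point count for the algebraic dynamical system attached to the Laplacian module $\M$ presented by $L_G$, and then to invoke the main theorem of \cite{LSW90}. First I would record the ``unrolling'' identity. For a finite-index subgroup $\La\subset\Z^d$ let $J_\La\subset\Rd$ be the ideal generated by $\{x^\s-1:\s\in\La\}$, so that $\Rd/J_\La\cong\Z[\Z^d/\La]$. Reducing the entries of $L_G$ modulo $J_\La$ and choosing coset representatives for $\Z^d/\La$ turns $L_G$ into the integer Laplacian matrix $L_{G_\La}$ of the finite cover $G_\La$; consequently
\[ \mathrm{coker}\,L_{G_\La}\ \cong\ \M\otimes_{\Rd}\Z[\Z^d/\La]\ =\ \M/J_\La\M. \]
Diagonalizing over $\Co$ by the characters $\rho$ of $\Z^d/\La$ (the points $\rho\in(\Co^*)^d$ with $\rho^\s=1$ for all $\s\in\La$) block-decomposes $L_{G_\La}\otimes\Co$ into the blocks $L_G(\rho)$ obtained by substituting $x_i=\rho_i$, whence $\det L_{G_\La}=\prod_\rho\De_G(\rho)$. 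In particular, when $\De_G(\rho)\ne0$ for every such $\rho$ the matrix is nonsingular, its cokernel is finite, and $\k_{G_\La}=|\det L_{G_\La}|=\prod_\rho|\De_G(\rho)|$.

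Next I would pass to the dual. Let $X=\widehat{\M}$ be the compact abelian group carrying the $\Z^d$-action $\a$ dual to multiplication by monomials on $\M$; this is the dynamical system of \cite{LSW90} associated to the square presentation $L_G$. Pontryagin duality identifies the group of $\La$-periodic points $\mathrm{Fix}_\La(X)$ with $\widehat{\M/J_\La\M}$, and the torsion complexity $\k_{G_\La}$ is exactly the order of the finite torsion subgroup of $\M/J_\La\M$, i.e.\ the cardinality of the finite part of $\mathrm{Fix}_\La(X)$. Since $\De_G=\det L_G\ne0$, the module $\M$ is a Noetherian torsion $\Rd$-module and $\De_G$ generates its zeroth Fitting ideal; the main theorem of \cite{LSW90} then computes the entropy of $\a$ as $h(\a)=\log M(\De_G)$, while its companion periodic-point result gives the growth rate of $\mathrm{Fix}_\La(X)$ as $\langle\La\rangle\to\infty$. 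Combining these would yield the asserted limsup formula.

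The heart of the argument, and the step I expect to be the main obstacle, is controlling the characters $\rho$ at or near the zero set of $\De_G$ on $\T^d$. A character with $\De_G(\rho)=0$ contributes to the free rank of $\M/J_\La\M$ rather than to $\k_{G_\La}$; since $\De_G\not\equiv0$ this zero set is a proper subvariety, so the number of such $\rho$ is subexponential in $|\Z^d/\La|$ and they are simply omitted from the product. The genuinely delicate points are those with $\De_G(\rho)$ small but nonzero, where $\log|\De_G(\rho)|$ is a large negative singular contribution, together with the regulator-type discrepancy between $\k_{G_\La}$ and the bare product of nonzero eigenvalues of $L_{G_\La}$. Both are precisely what the analysis of \cite{LSW90} is designed to absorb: under the hypothesis $\langle\La\rangle\to\infty$ the character points equidistribute in $\T^d$ and the (convergent but singular) integral $\int_{\T^d}\log|\De_G|=\log M(\De_G)$ is approached, though only in the limsup sense in general, because the rate at which the $\rho$ approach $\{\De_G=0\}$ can fluctuate.

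Finally, for $d=1$ I would upgrade the limsup to a genuine limit by an elementary computation. Here $\La=m\Z$ and the characters are the $m$-th roots of unity, so apart from the finitely many $m$ for which $\De_G$ vanishes at an $m$-th root of unity one has $\k_{G_m}=\prod_{\zeta^m=1}|\De_G(\zeta)|=|\mathrm{Res}(x^m-1,\De_G)|$. Expressing this through the roots $\lambda_j$ of $\De_G$ and using $\tfrac1m\log|\lambda_j^m-1|\to\log^+|\lambda_j|$ (valid for $|\lambda_j|\ne1$, and for $|\lambda_j|=1$ not a root of unity), the $d=1$ evaluation of $M$ recalled after Definition~\ref{mahler} gives $\tfrac1m\log\k_{G_m}\to\log M(\De_G)$ as an honest limit.
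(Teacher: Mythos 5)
Your treatment of the general-$d$ limsup formula is, in substance, the paper's own proof: you pass to the Pontryagin dual of the Laplacian module, identify $\k_{G_\La}$ with the number of connected components of the group of $\La$-periodic points (the dual of ${\L}_G/\La{\L}_G \cong \Z^{\b_\La}\oplus T({\L}_G/\La{\L}_G)$), and then invoke the Lind--Schmidt--Ward machinery twice: entropy equals the limsup growth rate of these component counts (Theorem 21.1 of \cite{Sc95}), and entropy equals $\log M(\det L_G)$ because the presentation matrix is square. That part is sound, and you are right that the near-zero characters and the torsion-versus-eigenvalue discrepancy are exactly what that machinery is designed to absorb.

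Two genuine problems remain. First, your ``generic'' case is vacuous: every graph Laplacian satisfies $\De_G(1,\ldots,1)=\det L_{\o G}=0$ because the row sums of $L_{\o G}$ vanish, and the trivial character occurs in $\Z^d/\La$ for every $\La$; so $L_{G_\La}$ is singular for \emph{every} $\La$, $\b_\La\ge 1$ always, and $\k_{G_\La}$ never equals $|\det L_{G_\La}|$. Second, and more seriously, this sinks your $d=1$ argument as written. The set of exceptional $m$ is not finite --- it is all of $\N$ --- so the identity $\k_{G_m}=|\mathrm{Res}(x^m-1,\De_G)|$ never holds (the resultant is always $0$, while $\k_{G_m}\ge 1$). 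Nor does simply deleting the vanishing factors repair it: in Example~\ref{circ} one has $\k_{G_r}=9^{r-1}$, whereas $\prod_{\zeta^r=1,\,\zeta\ne 1}|\De_G(\zeta)|=9^{r-1}r^2$, so the torsion order differs from the bare product of nonzero character values by an unbounded correction factor. Showing that this correction is subexponential, and handling the rank jumps along the infinitely many $m$ divisible by the order of any other cyclotomic zero of $\De_G$, is precisely the content of the periodic-point analysis you are trying to replace. Finally, the convergence $\frac1m\log|\lambda^m-1|\to 0$ for $|\lambda|=1$ not a root of unity is not an elementary computation: it requires a Gelfond/Baker-type lower bound on linear forms in logarithms, which is the crux of Lind's argument \cite{Li82}. (In fairness, the paper's written proof is itself silent on the $d=1$ refinement; but your sketch of it, as it stands, is not a proof.)
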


\begin{remark} \label{remarks} (1) The condition $\langle \La \rangle \to \infty$ ensures that fundamental region of $\La$ grows in all directions. 

(2) If $G$ is unweighted,  $\k_{G_\La}= \t_{G_\La}$ for every $\La$. In this case, Theorem \ref{limit} is proven in \cite{Ly05} with the limit superior replaced by ordinary limit. 

(3) When $d=1$, the finite-index subgroups $\La$ are simply $\Z/r\Z$, for $r >0$. In this case, we write $G_r$ instead of $G_\La$.

(4) When $d>1$,  a recent result of V. Dimitrov \cite{Di16} asserts that the limit superior in Theorem \ref{limit} is equal to the ordinary limit along sequences of sublattices $\La$ of the form $N\cdot \Z^d$, where $N$ is a positive integer. 

 \end{remark}

 \begin{figure}
\begin{center}
\includegraphics[height=3 in]{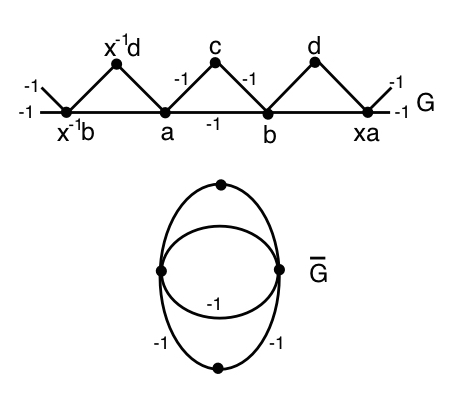}
\caption{1-Periodic graph $G$ with $\t_{G_r}=0$ for all $r \ge 1$}
\label{circulantdiagram}
\end{center}
\end{figure}

Before proving Theorem \ref{limit} we give an example that demonstrates the need for defining graph complexity as we do. 

\begin{example} \label{circ} Consider the 1-periodic graph $G$ in Figure \ref{circulantdiagram}. As before, unlabeled edges are assumed to have weight $1$. Generators for the Laplacian module are indicated. The Laplacian matrix is 
$$L_G= \begin{pmatrix} 0 &  1-x^{-1} & 1 & -x^{-1} \\ 1-x & 0 & 1 & -1 \\
1 & 1 & -2 & 0 \\ -x & -1 & 0 & 2 \end{pmatrix},$$ and 
$\De_G(x) = 9 (x-2+x^{-1}).$

The quotient  $G_2$ is the finite graph in Example \ref{milnor}. The  Laplacian matrix of any $G_r$ is easily described as a block matrix where $x$ is replaced by the companion (permutation) matrix for $x^r -1$. It is conjugate to a the diagonal block matrix ${\rm Diag}[L_G(1), \ldots, L_G(\zeta^{r-1})]$, where $\zeta$ is a primitive $r$th root of unity. The matrix $L_G(1)$ is the $4 \times 4$ Laplacian matrix of $\bar G$, 
$$ L_{\bar G}= \begin{pmatrix} 0 &  0 & 1 & -1\\ 0 & 0 & 1 & -1 \\
1 & 1 & -2 & 0 \\ -1 & -1 & 0 & 2 \end{pmatrix},$$ which has nullity 2. 
Hence the tree complexity $\t_{G_r}$ vanishes for every $r$. Nevertheless, by Theorem \ref{limit} the (torsion) complexity $\k_{G_r}$ is nontrivial and has exponential growth rate equal to $9$. One can verify directly that the Laplacian subgroup ${\cal L}_{G_r}$ is 
isomorphic to $\Z^2 \times (\Z/ 3^{r-1}\Z)^2.$\end{example} 

We proceed with the proof of Theorem \ref{limit}. 
 
 \begin{proof} The proof that we present is a direct application of a  theorem of D. Lind, K. Schmidt and T. Ward (see \cite{LSW90} or  Theorem 21.1 of \cite{Sc95}). We review the ideas for the reader's convenience. 
 
Recall that the Laplacian module ${\L}_G$ is the finitely generated module over the ring $\Rd$ with presentation matrix equal to the $n \times n$ Laplacian matrix $L_G$. We let  $\T$ be the additive circle group $\R/\Z$, and we consider the Pontryagin dual group $\widehat {\L}_G= {\rm Hom}({\L}_G, \T)$. We regard ${\L}_G$ as a discrete topological space. Endowed with the compact-open topology,
 $\widehat{\L}_G$ is a compact $0$-dimensional space. Moreover, the module actions of $x_1, \ldots, x_d$ determine commuting homeomorphisms $\si_1, \ldots, \si_d$ of $\widehat {\L}_G$. Explicitly, $(\si_j \rho)(a) = \rho(x_j a)$ for every $a \in {\L}_G$. Consequently, $\widehat {\G}_G$ has a $\Z^d$-action $\si: \Z^d \to \text{Aut}(\widehat {\L}_G)$.

The pair $(\widehat {\L}_G, \si)$ is an algebraic dynamical system, well defined up to topological conjugacy (that is, up to a homeomorphism of $\widehat {\L}_G$ respecting the $\Z^d$ action). In particular its periodic point structure is well defined.  

Topological entropy $h(\si)$ is a well-defined quantity associated to $(\widehat {\L}_G, \si)$, a measure of complexity of the $\Z^d$-action $\si$. We refer the reader to \cite{LSW90} or \cite{Sc95} for the definition.

For any subgroup $\La$ of $\Z^d$, a $\La$-periodic point is a member of $\widehat {\L}_G$ that is fixed by every element of $\La$. The set of $\La$-periodic points is a finitely generated abelian group isomorphic to the Pontryagin dual group ${\rm Hom}({\L}_G/\La {\L}_G), \T)$.

The group ${\L}_G/\La {\L}_G$ is the Laplacian module of the quotient graph $G_\La$. As a finitely generated abelian group, it decomposes 
as $\Z^{\b_\La} \oplus T({\L}_G/\La {\L}_G)$, where $\b_\La$ is the 
rank of ${\L}_G/\La {\L}_G$ and $T( \cdots )$ denotes the (finite) torsion subgroup. The Pontryagin dual group consists of $P_\La = |T({\L}_G/\La {\L}_G)|$ tori each of dimension $\b_\La$. By Theorem 21.1 of \cite{Sc95}, the topological entropy $h(\si)$ is:  \begin{equation*} h(\si) = \limsup_{\langle \La \rangle \to \infty}  \frac{1}{|\Z^d/\La|} \log P_\La =\limsup_{\langle \La \rangle \to \infty}  \frac{1}{|\Z^d/\La|} \log \k_\La.  \end{equation*}
Since the matrix $L_G$ that presents ${\L}_G$ is square, $h(\si)$ can be computed also as the logarithm of the Mahler measure $M(\det L_G)$ (see Example 18.7(1) of \cite{Sc95}).  The determinant of $L_G$ is, by definition, the Laplacian polynomial $\De_G$. Hence the proof is complete. 
\end{proof}

\section{Lehmer's question} In \cite{Le33} D.H. Lehmer 
asked the following yet unresolved question. 

\begin{question} \label{LQ} Do there exist  integral polynomials with Mahler measures arbitrarily close but not equal to 1? 

\end{question} 

Lehmer discovered the polynomial $x^{10} + x^9 - x^7 - x^6 - x^5 - x^4 - x^3 +x +1,$ which has Mahler measure equal to $1.17628...$, but he could do no better. Lehmer's question remains unanswered despite great effort including extensive computer-aided searches \cite{Bo80, Bo81, MS12, Mo98, Ra94}.

Topological and geometric perspectives of Lehmer's question have been found \cite{Hi03}.  In \cite{SW07} we showed that Lehmer's question is equivalent to a question about Alexander polynomials of fibered hyperbolic knots in the lens spaces $L(n, 1), n>0$. (Lens spaces arose from the need to consider polynomials $f(x)$ with $f(1) =n \ne 1$.) 
Here we present another, more elementary equivalence, in terms of graph complexity.

We will say that a Laurent polynomial $f(x) \in {\cal R}_1$ is \emph{palindromic} if 
$f(x^{-1}) = f(x)$. 
By a theorem of C. Smyth \cite{Sm71} it suffices to restrict our attention to palindromic polynomials when attempting to answer Lehmer's question.  

\begin{prop} \label{realized} A polynomial $\De(x)$ is the Laplacian polynomial of a 1-periodic graph if and only if it has the form $(x-2+x^{-1})^2 f(x)$, where $f(x)$ is a palindromic polynomial. 
\end{prop}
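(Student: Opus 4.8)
The plan is to prove both implications from the cycle-rooted spanning forest expansion of Proposition \ref{poly}, exploiting the palindromic symmetry it forces. For the ``only if'' direction I would first observe that $\De_G$ is palindromic: in \eqref{polys} each weight $c_e$ is an integer constant and each cyclic factor is $2-w-w^{-1}$ with $w=x^k$, which is invariant under $x\mapsto x^{-1}$, so the whole sum is too. Writing $x-2+x^{-1}=x^{-1}(x-1)^2$, it then suffices to prove $(x-2+x^{-1})^2\mid \De_G$, since the quotient $f:=\De_G/(x-2+x^{-1})^2$ is automatically palindromic, being a ratio of palindromic Laurent polynomials.

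The divisibility is where the two factors must be extracted, and I expect it to be the main obstacle. One factor is immediate: $\De_G(1)=\det L_G(1)=\det L_{\o G}=0$ because the integer Laplacian of the quotient has vanishing row sums, and a palindromic Laurent polynomial that vanishes at $1$ has a root of even order there (if $p(x^{-1})=p(x)$ and $p=(x-1)^m g$ with $g(1)\neq 0$, comparing $p(x)$ with $p(x^{-1})$ forces $m$ even); hence $(x-2+x^{-1})\mid \De_G$. For the second factor I would argue termwise in \eqref{polys}, using the identity $2-x^k-x^{-k}=-(x-2+x^{-1})\,x^{-(k-1)}(1+x+\cdots+x^{k-1})^2$, so that a forest with $c$ essential cycles contributes a multiple of $(x-2+x^{-1})^c$. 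The crux is to show that after the unavoidable cancellations $\De_G/(x-2+x^{-1})$ still vanishes at $x=1$, i.e.\ that $\det L_G$ vanishes to order at least four there; I would try to force this from the interplay of the nullity of $L_{\o G}$ with the winding data of $\o G$ in the annulus, and this is the step I am least certain of.

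For the ``if'' direction I would bootstrap on the single-factor realization. The annular (circulant) construction illustrated in Example \ref{circ} should, for an arbitrary palindromic $g\in{\cal R}_1$, produce a $1$-periodic graph whose Laplacian determinant equals $(x-2+x^{-1})\,g(x)$ up to a unit of ${\cal R}_1$: one assembles $\o G$ from vertex orbits joined by weighted edges, some carrying the monomial $x$ to record winding, and reads the coefficients of $g$ off the weights and edge multiplicities, the forced factor $(x-2+x^{-1})$ arising from the zero-row-sum vanishing at $x=1$. Given a target palindromic $f$, I would apply this to $g:=(x-2+x^{-1})\,f$, which is again palindromic, obtaining a graph with $\De_G=(x-2+x^{-1})\,g=(x-2+x^{-1})^2 f$, as required.

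In short, necessity splits into automatic palindromicity, a routine first factor, and a delicate second factor, while sufficiency reduces to one realization lemma applied to $(x-2+x^{-1})f$. The main obstacle is that second factor: the termwise estimate only yields one power of $(x-2+x^{-1})$ per essential cycle, so I must either rule out or cancel the contributions of forests carrying a single essential cycle. I would also verify that the realization lemma accommodates every palindromic $f$—including its sign and constant term—and that the units of ${\cal R}_1$ absorbed en route do not disturb the stated normal form.
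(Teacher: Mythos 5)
You are right to be least certain about the ``second factor'': that step is not merely delicate, it is impossible, because the statement as printed is false and the exponent $2$ is a misprint for $1$. The paper's own Example \ref{circ} exhibits a $1$-periodic graph with $\De_G(x)=9\,(x-2+x^{-1})$, which vanishes to order exactly two at $x=1$; likewise $\De_{\Gr_1}=2-x-x^{-1}$. So $(x-2+x^{-1})^2\mid\De_G$ fails in general, and no interplay of the nullity of $L_{\o G}$ with winding data can force $\det L_G$ to vanish to order four at $x=1$ (in Example \ref{circ} the nullity of $L_{\o G}$ is $2$, yet the order is still $2$). Your termwise bound from \eqref{polys} --- one factor of $x-2+x^{-1}$ per essential cycle --- is sharp: a connected quotient graph admits cycle-rooted spanning forests with a single essential cycle, and these contributions need not cancel. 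That the intended statement carries exponent $1$ is confirmed by how the proposition is used: Example \ref{circulant} realizes $(x-2+x^{-1})\,x^{-5}f(x)$ with $f$ Lehmer's polynomial (note $f(1)=-1\neq 0$, so no second factor is available), and the proof of Theorem \ref{LQ} invokes Proposition \ref{realized} precisely for polynomials of the form $(x-2+x^{-1})f(x)$ with $f$ palindromic. Indeed the paper's own argument only establishes that the multiplicity of $x-1$ in $\De_G$ is even, hence at least two, which yields the single factor $x^{-1}(x-1)^2=x-2+x^{-1}$.

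With the exponent corrected, your proposal is essentially the paper's proof, and the parts you did write out are sound. Necessity: palindromicity (the paper reads it off the symmetry $L_G(x^{-1})=L_G(x)^T$, and explicitly notes your route through \eqref{polys} as an alternative), vanishing at $x=1$ from zero row sums, even multiplicity from palindromicity, and the automatic palindromicity of the quotient $f$. Sufficiency: rather than an unproved ``realization lemma,'' the paper gives a two-line normal form --- any palindromic $p$ with $p(1)=0$ can be written $p=\sum_{s\ge 1}c_s\,(x^s-2+x^{-s})$ by pairing the coefficients of $x^{\pm s}$, the constant term being determined by $p(1)=0$ --- and then realizes $p$ exactly by a graph with a single vertex orbit whose quotient has, for each $s$ with $c_s\neq 0$, an edge of weight $-c_s$ winding $s$ times around the annulus, so that the $1\times 1$ Laplacian matrix equals $p$ on the nose (this settles your worries about signs and units). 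Your bootstrap through $g:=(x-2+x^{-1})f$ is a correct way to obtain the printed square form from that lemma, so your ``if'' direction stands; it is only the ``only if'' direction with the square that cannot be repaired, and you should instead prove (and use) the exponent-$1$ statement.
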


\begin{proof} The Laplacian polynomial $\De(x)$ of any $1$-periodic graph is palindromic. This is easy to see from the symmetry of the matrix $L_G$. Since the row-sums of $L_G$ become zero when $x=1$, it follows also that $x-1$ divides $D(x)$. (Both observations  follow also from Proposition \ref{poly}.) 
Palindromicity requires that the multiplicity of $x-1$ be even. Hence $\De(x)$ has the form $(x-2+x^{-1})^2 f(x)$, where $f(x)$ is palindromic. 

In order to see the converse assertion, consider any polynomial of the form $p(x)=(x-2-x^{-1})^2f(x)$, where $f(x)$ is palindromic. A straightforward induction on the degree of $f(x)$ shows that we can pair each term $\pm x^s$ with $\pm x^{-s}$, and then write $p(x)$
as a sum of terms $\pm(x^s-2+x^{-s})$. Then $p(x)$ is the Laplacian polynomial of a 1-periodic graph, constructed as in the following example. 
\end{proof} 

\begin{example} \label{circulant} Multiplying Lehmer's polynomial $f(x)= x^{10} + x^9 - x^7 - x^6 - x^5 - x^4 - x^3 +x +1$ by the unit $x^{-5}$ and then by $x-2+x^{-1}$ yields
$x^6-x^5-x^4+x^2+x^{-2}-x^{-4}-x^{-5}+x^{-6},$
which in turn can be written as 
$$(x^2-2+x^{-2}) -(x^4-2+x^{-4})-(x^5-2+x^{-5})+ (x^6 -2 + x^{-6}).$$
This the Laplacian polynomial of a 1-periodic graph $G$. The quotient graph $\o G$ is easily described. It has a single vertex, two edges with weight $+1$ and two with $-1$. The $(+1)$-weighted edges wind twice and six times, respectively,  around the annulus in the direction corresponding to $x$. The $(-1)$-weighted edges wind four and five times, respectively, in the opposite direction.
\end{example}

\begin{theorem} \label{LQ} Lehmer's question is equivalent to the following. 
Given $\e >0$, does there exist a 1-periodic graph $G$ such that 
$$1 < \lim_{r \to \infty}  (\t_{G_r})^{1/r} < 1+\e?$$ 
\end{theorem}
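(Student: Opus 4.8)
The plan is to prove the equivalence by converting between the analytic formulation (Mahler measure) and the combinatorial formulation (tree complexity growth rate), using the tools assembled earlier in the paper. The key bridge is that for a $1$-periodic graph $G$ with $\De_G \ne 0$, Theorem \ref{limit} gives $\lim_{r\to\infty} (1/r)\log \k_{G_r} = \log M(\De_G)$, so that $\lim_{r\to\infty}(\k_{G_r})^{1/r} = M(\De_G)$. The remaining gap is that Theorem \ref{LQ} is phrased in terms of the \emph{tree} complexity $\t_{G_r}$, not the torsion complexity $\k_{G_r}$, and these agree only when $\t_{G_r}\ne 0$. So my first step would be to argue that it suffices to work with graphs whose Laplacian polynomial has the specific shape coming from Proposition \ref{realized}, and to check that for a suitable such realization the quantity $\t_{G_r}$ is actually nonzero (or else to absorb the distinction into the limit), so that $\lim(\t_{G_r})^{1/r} = M(\De_G)$.

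Next I would set up the two directions. For the forward direction (Lehmer $\Rightarrow$ graph statement), suppose polynomials of Mahler measure in $(1,1+\e)$ exist. By Smyth's theorem \cite{Sm71} we may take such a polynomial $f(x)$ to be palindromic, and by Proposition \ref{realized} the polynomial $(x-2+x^{-1})^2 f(x)$ is the Laplacian polynomial $\De_G$ of an explicitly constructible $1$-periodic graph $G$, exactly as in Example \ref{circulant}. Since $M\big((x-2+x^{-1})^2\big)=1$ and $M$ is multiplicative (Remark, part (3)), we get $M(\De_G) = M(f) \in (1,1+\e)$, and then Theorem \ref{limit} delivers a graph with the desired complexity growth rate strictly between $1$ and $1+\e$. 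For the reverse direction, given a $1$-periodic graph $G$ with $1 < \lim (\t_{G_r})^{1/r} < 1+\e$, I would read off $M(\De_G) \in (1,1+\e)$ from Theorem \ref{limit}; since $\De_G$ is an honest integral (Laurent) polynomial, after clearing the monomial denominator it is an integral polynomial with Mahler measure in $(1,1+\e)$, answering Lehmer affirmatively.

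The main obstacle, and the step I would spend the most care on, is the $\t_{G_r}$ versus $\k_{G_r}$ issue and the degenerate factor $(x-2+x^{-1})^2$. That factor has a double root at $x=1$, which forces the Laplacian matrix $L_{\o G}$ to have nullity at least two, and Example \ref{circ} shows precisely this phenomenon: there $\t_{G_r}=0$ for every $r$ even though $\k_{G_r}$ grows at rate $9$. This is exactly why the theorem is stated with $\t_{G_r}$ rather than $\k_{G_r}$ — the vanishing of the tree complexity is not an obstruction but the generic situation for these realizations. So the delicate point is to confirm that $\lim (\t_{G_r})^{1/r}$ still equals $M(\De_G)$ despite $\t_{G_r}$ possibly being $0$; one resolves this either by interpreting $\t_{G_r}$ appropriately or by noting that Remark \ref{remarks}(2) and the unweighted case control the relationship, and that the strict inequality $1<\lim$ guarantees we are not in a trivial-growth regime.

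Finally, I would verify that both inequalities are genuinely strict and that the endpoints behave correctly: the lower bound $1 < M(\De_G)$ corresponds to $f$ being noncyclotomic (by Remark, part (3), $M(f)=1$ iff $f$ is a product of cyclotomic-type factors), and the upper bound transfers directly. The cleanest write-up would treat the correspondence $f \leftrightarrow G$ from Proposition \ref{realized} as an explicit dictionary and then invoke Theorem \ref{limit} as a black box in both directions, so that the whole argument reduces to the multiplicativity of Mahler measure together with Smyth's reduction to palindromic polynomials.
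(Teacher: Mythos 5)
Your overall skeleton matches the paper's: Smyth's reduction to palindromic polynomials, the realization given by Proposition \ref{realized}, multiplicativity of Mahler measure to discard the factor $x-2+x^{-1}$, and Theorem \ref{limit} as the bridge between Mahler measure and complexity growth. The reverse direction as you state it is fine: if $1<\lim(\t_{G_r})^{1/r}$ then $\t_{G_r}\ne 0$ for all large $r$, hence $\t_{G_r}=\k_{G_r}$ there, and Theorem \ref{limit} produces an integral polynomial with Mahler measure in $(1,1+\e)$.

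The genuine gap is in the forward direction, exactly at the point you flag as delicate but then do not resolve. Theorem \ref{limit} controls $\k_{G_r}$, and to conclude anything about $\t_{G_r}$ you must prove that the graph you construct satisfies $\t_{G_r}\ne 0$ for (all large) $r$. Your discussion of this point is wrong in two ways. First, you assert that vanishing of the tree complexity ``is not an obstruction but the generic situation'': it is precisely an obstruction, because if $\t_{G_r}=0$ for every $r$ (as in Example \ref{circ}) then $\lim(\t_{G_r})^{1/r}=0$, so the constructed graph fails the condition $1<\lim(\t_{G_r})^{1/r}$ and the forward implication proves nothing. Second, Remark \ref{remarks}(2) cannot rescue you, since it applies only to unweighted graphs, while the realization necessarily uses weights $\pm 1$. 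The paper closes the gap as follows: one may take $f$ irreducible and non-cyclotomic (otherwise $M(f)=1$), and use the realization by a graph with a \emph{single vertex orbit}, so that $L_G$ is the $1\times 1$ matrix $(\De_G(x))$. Then $L_{G_r}$ is obtained by substituting the companion matrix of $x^r-1$ for $x$, hence is conjugate to ${\rm Diag}[\De_G(1),\De_G(\zeta),\ldots,\De_G(\zeta^{r-1})]$, and its nullity equals the number of $r$th roots of unity at which $\De_G$ vanishes. Since $f$ has no roots at roots of unity, that number is $1$ (only $x=1$), independently of the multiplicity of the factor $x-2+x^{-1}$; in particular your claim that the repeated factor ``forces $L_{\o G}$ to have nullity at least two'' is false for this realization --- in Example \ref{circ} the nullity $2$ comes from that particular four-vertex-orbit graph, not from the shape of the polynomial. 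Nullity $1$ forces $G_r$ to be connected with all cofactors of $L_{G_r}$ equal and nonzero, so $\t_{G_r}=\k_{G_r}\ne 0$ by the discussion following Definition \ref{complexity}, and only then does Theorem \ref{limit} give $1<\lim_{r\to\infty}(\t_{G_r})^{1/r}=M(f)<1+\e$.
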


\begin{proof} When investigating Lehmer's question it suffices to consider polynomials of the form $(x-2+x^{-1})f(x)$, where $f(x)$ is  palindromic and irreducible. By Proposition \ref{realized} any such polynomial is realized as the Laplacian polynomial of a 1-periodic graph $G$ with a single vertex orbit. As in Example \ref{circ} the Laplacian matrix $L_{G_r}$ of any finite quotient $G_r$ can be obtained from $(x-2+x^{-1})f(x)$ by substituting for $x$ the companion matrix for $x^r-1$. Hence the nullity of $L_{G_r}$ is 1 provided that $f(x)$ is not a cyclotomic polynomial (multiplied by a unit), a condition that we can assume without loss of generality. 
Hence $\kappa_{G_r} = \tau_{G_r}$ for each $r$ (see discussion following Definition \ref{complexity}.) Theorem \ref{limit} completes the proof. 

\end{proof}

\begin{remark}

(1) The conclusion of Theorem \ref{LQ} does not hold if we restrict ourselves to unweighted graphs. By Theorem \ref{absolute} below,  the Mahler measure of the Laplacian polynomial of any 1-periodic graph with all edge weights equal to 1 is at least 2. \bigskip

(2) If a 1-periodic graph $G$ as in Example \ref{circulant} can be found with $M(D_G)$ less than Lehmer's value $1.17628...$, then by results of  \cite{Mo08} some edge of $\o G$ must wind around the annulus at least 29 times. \end{remark}

The cyclic 5-fold cover of the graph in Example \ref{circulant} contains the complete graph on 5 vertices, and hence it is nonplanar.  Hence we ask: 

\begin{question} \label{question1} Is Theorem \ref{LQ} still true if we require that the graphs $G$ be planar?  
\end{question}

\section{Laplacian polynomials of plane 1-periodic graphs} A finite plane graph determines a diagram of a \emph{medial link} by a simple procedure in which each edge of the graph is replaced by two arcs as in Figure \ref{convention1}. 
If the graph is unweighted then the resulting diagram is alternating as in Example  \ref{gridlinks} below. (The reader is invited to sketch the medial link associated to Figure \ref{milnor}. It is a non-alternating boundary link. According to \cite{Co70} it was introduced by J. Milnor and is the first link known to have zero Alexander polynomial.) 

It is well known that the Laplacian matrix of a finite plane graph $G$ is an unreduced Goeritz matrix of the associated link \cite{Go33}. The reduced matrix, obtained by deleting a row and column, presents the first homology group of the 2-fold cyclic cover of $\S^3$ branched over the link. (See \cite{Re74, Tr85, Li97} for additional details.)
 
When $G$ is a 1- or 2-periodic graph we apply the same construction to produce a diagram $\D$ of an infinite link $\ell$. It has a finite quotient diagram $\o \D$ modulo the induced $\Z$- or $\Z^2$-action on $G$. 

For $d=1$ we regard $\o \D$ in an annulus where it describes a link $\o \ell = \o \ell_1 \cup \cdots  \cup \o \ell_\mu$ in a solid unknotted torus $V$. The complement $({\rm int} V) \setminus \o \ell$ is homeomorphic to the $\S^3 \setminus \hat \ell$, where  $\hat \ell$ is the link $\o \ell \cup C$ formed by the union of $\o \ell$ with a meridian $C$ of $V$. The meridian acquires an orientation induced by the infinite cyclic action on $\D$.  The following result relates the Laplacian polynomial $\De_G$ to the Alexander polynomial $\Delta_{\hat \ell}$.

\begin{figure}
\begin{center}
\includegraphics[height=1  in]{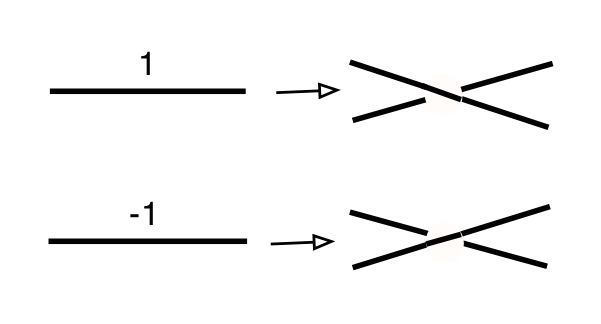}
\caption{Medial link construction for $(\pm 1)$-edge weighted graphs}
\label{convention1}
\end{center}
\end{figure}

\begin {theorem} \label{alex} Let $G$ be a plane 1-periodic graph and $\hat \ell$ the encircled link $\o \ell \cup C$. Then 
$$\De_G(x) \  {\buildrel \cdot \over =}\   (x-1)\ \Delta_{\hat \ell}(-1, \ldots, -1, x),$$
where ${\buildrel \cdot \over =}$  indicates equality up to multiplication by units in $\Z[x^{\pm 1}]$. 

\end{theorem}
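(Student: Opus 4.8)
The plan is to identify $L_G$ with an equivariant Goeritz matrix of the periodic medial diagram $\D$ and then run the classical Goeritz-to-branched-cover dictionary with the deck variable $x$ carried along. First I would fix a checkerboard coloring of the medial diagram of $\o G$ so that the shaded regions are exactly the faces meeting the vertices $v_1, \ldots, v_n$ of $\o G$; under the medial construction each edge $e_j$ of $\o G$ becomes a single crossing, and the two crossing types are recorded by the edge weight $w_{e_j} = \pm1$ (Figure \ref{convention1}). Lifting to the $\Z$-cover, an edge $e_{j,\s}$ joining $v_{i,0}$ to $v_{k,\s}$ contributes a crossing between the corresponding shaded regions with deck-monomial $x^\s$. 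Tracking these contributions region-by-region shows that the equivariant Goeritz matrix of $\D$ — the $n\times n$ matrix over $\Z[x^{\pm1}]$ whose $(i,k)$ entry is the signed, $x$-weighted count of crossings between shaded regions $i$ and $k$ (with the usual diagonal convention) — is exactly the Laplacian matrix $L_G$. This is the 1-periodic refinement of the finite fact quoted above, that $L_{\o G}$ is an unreduced Goeritz matrix; the deck variable $x$ is precisely the linking number with the meridian $C$, since encircling $C$ once is the generator of the $\Z$-action on $\D$.

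Next I would pass to homology. Deleting one row and column of $L_G$ — say the one indexed by $v_n$ — produces the reduced equivariant Goeritz matrix $\hat L_G(x)$. In the finite case the reduced Goeritz matrix presents $H_1$ of the double cover of $\S^3$ branched over the link; carrying the variable $x$ along, $\hat L_G(x)$ presents, as a $\Z[x^{\pm1}]$-module, the first homology of the infinite cyclic cover — the cover dual to $C$ — of the double branched cover of the solid torus $V$ over $\o\ell$. Using the homeomorphism $(\mathrm{int}\,V)\setminus\o\ell \cong \S^3\setminus\hat\ell$ recorded above, this module is computed by the multivariable Alexander module of $\hat\ell$ under the specialization that sends each meridian of $\o\ell_1,\ldots,\o\ell_\mu$ to $-1$ (the double cover in those directions) and the meridian of $C$ to $x$ (the infinite cyclic cover in that direction). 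Taking orders, I would conclude $\det \hat L_G(x) \ {\buildrel \cdot \over =}\ \Delta_{\hat\ell}(-1,\ldots,-1,x)$.

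Finally I would account for the factor $(x-1)$ by comparing the unreduced and reduced determinants. Since the row sums of $L_G(x)$ vanish at $x=1$ (as already observed in the proof of Proposition \ref{realized}), the all-ones vector $\mathbf 1$ satisfies $L_G(x)\,\mathbf 1 = (x-1)\,\mathbf s(x)$ for some $\mathbf s(x)\in\Z[x^{\pm1}]^n$; replacing one column of $L_G$ by the column of row sums extracts a factor $(x-1)$, and together with the matching column-sum relation coming from the palindromic symmetry of $L_G$ this yields $\De_G(x)=\det L_G(x)\ {\buildrel \cdot \over =}\ (x-1)\det\hat L_G(x)$ up to units of $\Z[x^{\pm1}]$. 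Combining with the previous step gives the asserted formula. The main obstacle is the middle step: making the branched-cover identification precise, in particular matching the $t_i\mapsto -1$ specialization of the Alexander module with the Goeritz presentation and verifying that the resulting matrix is $\Z[x^{\pm1}]$-equivalent to $\hat L_G(x)$, including the passage between the branched and unbranched double covers along $\o\ell$. I expect to handle this either by a Reidemeister--Schreier/Fox-calculus computation on a Wirtinger presentation of $\pi_1(\S^3\setminus\hat\ell)$ adapted to the checkerboard surface, or by invoking the Gordon--Litherland interpretation of the Goeritz form for the spanning surface built from the shaded regions, with any remaining unit ambiguity absorbed into the ${\buildrel \cdot \over =}$.
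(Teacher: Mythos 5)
Your reduction to a deleted row-and-column matrix $\hat L_G$ is where the argument breaks, and both of your last two steps already fail on the simplest example. Take $G=\Gr_1$, so that $\o G$ is a single vertex with one edge winding once around the annulus. Then $L_G$ is the $1\times 1$ matrix $(2-x-x^{-1})$, so $\De_G\doteq (x-1)^2$, while $\hat L_G$ is the empty $0\times 0$ matrix with $\det\hat L_G=1$. Your third step would give $\De_G\doteq(x-1)\det\hat L_G\doteq(x-1)$, wrong by the non-unit factor $(x-1)$; and your second step would give $\Delta_{\hat\ell}(-1,x)\doteq 1$, whereas the theorem forces $\Delta_{\hat\ell}(-1,x)\doteq(x-1)$ (here $\hat\ell$ is the closure of the $2$-braid $\sigma_1$ together with its axis, and a Burau computation confirms this value). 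The root cause is that the classical dictionary you are importing depends on the unreduced Goeritz matrix of a diagram in $S^2$ being \emph{singular}: its rows sum to zero identically, the all-ones vector spans the kernel, and deleting one row and column is the correct normalization, after which the reduced matrix presents $H_1$ of the double branched cover. In the periodic setting the row sums of $L_G(x)$ vanish only \emph{at} $x=1$, not identically (relatedly, your ``usual diagonal convention'' must be interpreted as the signed crossing count \emph{without} $x$-weights, else $L_G$ would be singular and $\De_G\equiv 0$); so $L_G$ is generically nonsingular and no row/column deletion is appropriate. Your column manipulation proves only that $(x-1)$ divides $\De_G$: writing $L_G\mathbf{1}=(x-1)\,\mathbf{s}(x)$ gives $\De_G=(x-1)\det[\,\cdots\mid\mathbf{s}(x)\,]$, and there is no reason --- and it is false --- that $\det[\,\cdots\mid\mathbf{s}(x)\,]\doteq\det\hat L_G$.

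By contrast, the paper never reduces the matrix: the full $n\times n$ matrix $L_G$ is shown to present the specialized \emph{relative} Alexander module, and the factor $(x-1)$ has a different origin. Concretely, the paper takes the countable Dehn presentation of $\pi_1(\R^3\setminus\ell)$ (the group of the infinite cyclic cover that your plan also implicitly uses), applies Fox calculus with the augmentation sending white (face) generators to $x^\nu$ and black (vertex) generators to $-x^\nu$, eliminates the white generators using the Dehn relations along a spanning tree of the dual graph $G^*$, and then proves --- this is Lemma \ref{lem}, an induction over cycles of $G^*$ --- that every remaining Dehn relation is a consequence of the tree relations together with the Laplacian relations. That redundancy is exactly what makes the resulting presentation the square matrix $L_G$. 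The factor $(x-1)$ then enters because $F(x)=\Delta_{\hat\ell}(-1,\ldots,-1,x)$ is the $0$th determinantal invariant of that relative (based) module divided by $x-1$, the standard relative-versus-absolute discrepancy, not because a reduced determinant is being multiplied back up. If you want to salvage the branched-cover route, the statement you must prove concerns the full matrix: identify the $\Z[x^{\pm 1}]$-module presented by $L_G$ with the homology of the appropriate equivariant cover relative to the basepoint orbit, and extract $(x-1)$ module-theoretically. That identification is precisely the step you deferred at the end of your proposal, and it is where all of the content of the theorem lies.
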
 

\begin{proof} We abbreviate $\Delta_{\hat \ell}(-1, \ldots, -1, x)$ by $F(x)$ and compute it via Fox calculus. Such a calculation can be done using the link group 
$\pi_1(\R^3 \setminus \hat \ell)$ and augmentation homomorphism $\phi: \Z[ \pi_1(\R^3 \setminus \hat \ell) ] \to \Z[x^{\pm 1}]$ that maps a meridian of $C$ to $x$ and meridians of $\o \ell$ to $-1$. Instead we will make use of $\pi_1(\R^3 \setminus \ell)$, an infinite-index subgroup of $\pi_1(\R^3\setminus \hat \ell)$. It has a countable Dehn presentation associated to the diagram $\D$. The presentation has generators $a_\nu, b_\nu, \ldots, \nu \in \Z$ corresponding to bounded regions together with a generator $u$ corresponding to one of the two unbounded regions of $\D$. (The second unbounded region, called the \emph{base region}, is labeled zero.) Here $a_\nu, b_\nu, \ldots$ is short-hand for the families $\{ x^\nu a x^{-\nu}\}, \{ x^\nu b x^{-\nu}\}, \ldots$, indexed by $\nu \in \Z$. The generators $a, b, \ldots$ are arbitrary but fixed orbit representatives. We regard $u$ as $\{u_\nu \mid u_\nu = u_{\nu +1}\}$. 

We recall that Dehn generators are related to the more-familiar Wirtinger generators once an orientation of the link $\ell$ is given. A Dehn generator corresponds to a loop that begins at a base-point above the plane, pierces the plane in the region of the Dehn generator, and returns through the base region. While the Dehn presentation requires no orientation of $\ell$, the restriction of the augmentation homomorphism $\phi$ to $\Z[\pi_1(\R^3 \setminus \ell)]$ is determined only with coefficients modulo 2. For this, we checkerboard color the diagram $\D$ with black and white in such a way that black regions contain the vertices of $G$. Under the augmentation homomorphism \emph{white generators} $a_\nu = x^\nu a x^{-\nu}$, those corresponding to white regions, map to $x^\nu$. \emph{Black generators},  those belonging to black regions, map to $-x^\nu$. 

Relations of the Dehn presentation correspond to crossings of $\D$, as in Figure \ref{convention}. The relation arising from the crossing at the top of the diagram is 
$a b^{-1} =c d^{-1}$ while the crossing at the bottom we may write as   $bd^{-1}= ac^{-1}$.  Like the generators, the relations come in countable families. Applying Fox calculus to relations, we obtain $a_\nu + b_\nu = c_\nu + d_\nu$ (top) and $b_\nu + d_\nu= a_\nu + c_\nu $ (bottom). We denote the collection of  Dehn relations by ${\cal R}$. 

\begin{figure}
\begin{center}
\includegraphics[height=2.5  in]{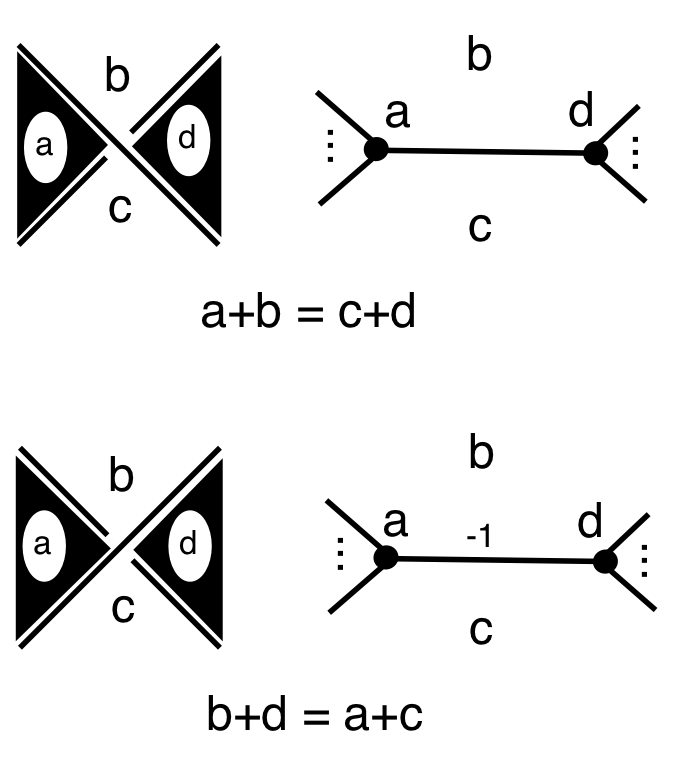}
\caption{Dehn relations}
\label{convention}
\end{center}
\end{figure}

Fix a black region of $\D$ (vertex of $G$) and identify it with the generator $a$ in Figure \ref{convention}. When we can add the relations involving $a$, the white generators cancel in pairs and we are left with the Laplacian relation of $G$ at the vertex. Note that this collection ${\cal R'}$ of Laplacian relations is a consequence of ${\cal R}$. 

Consider the dual graph $G^*$ and choose a spanning tree $T^*$ rooted at the base region of $\D$. Each edge of $T^*$ crosses a unique edge of $G$, and hence $T^*$ corresponds to a subset  ${\cal R}_0$ of ${\cal R}$.  Using these relations we can express each white generator in terms of black generators. Lemma \ref{lem} below will show that the remaining relations in ${\cal R}$ are consequences of ${\cal R}_0 \cup {\cal R'}$. Once we use the relations ${\cal R}_0$ to eliminate white generators and rewrite the remaining relations, we see the presentation of ${\cal L}_G$ given by the Laplacian matrix $L_G$. Since the polynomial $F(x)$ 
is the $0$th determinantal invariant of the module divided by $x-1$, the proof is complete.

\end{proof} 
 
\begin{lemma} \label{lem} Let $G$ be a plane 1-periodic graph, and let ${\cal R, R'}$  be the Dehn and Laplacian relations of the link diagram resulting as above from the medial construction. Let ${\cal R}_0$ be the Dehn relations corresponding to the edges of a spanning tree for the dual graph $G^*$ rooted at the base region of $\D$. Then any Dehn relation in ${\cal R} \setminus {\cal R}_0$  is a consequence of ${\cal R}_0 \cup {\cal R'}$.\end{lemma}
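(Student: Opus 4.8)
The plan is to read the two relation families through the planar duality already implicit in the construction. Each crossing of $\D$ sits at an edge of the dual graph $G^*$, the four incident regions being the two endpoints of that edge (the white regions, i.e.\ vertices of $G^*$) and the two faces of $G^*$ on either side (the black regions, i.e.\ vertices of $G$). Thus the Dehn relations ${\cal R}$ are indexed by the edges of $G^*$, the subset ${\cal R}_0$ by the edges of the spanning tree $T^*$, and --- as already observed in the proof of Theorem \ref{alex} --- the Laplacian relations ${\cal R}'$ are exactly the \emph{face relations} of $G^*$: summing the Dehn relations around a black region telescopes the white generators away and leaves the relation attached to the corresponding face of $G^*$. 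The whole statement is therefore a purely combinatorial assertion about the cellular structure of the plane graph $G^*$ sitting in the strip.

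With this dictionary the natural tool is a discrete Green's (Stokes') theorem. First I would fix a non-tree edge $e \in {\cal R}\setminus{\cal R}_0$ and form its fundamental cycle $C_e$, the unique cycle in $T^*\cup\{e\}$. Working in the plane rather than the annulus, $C_e$ bounds a disk $D_e$ that meets only finitely many faces of $G^*$. The key computation is that the signed sum of the face relations ${\cal R}'$ over the faces enclosed by $C_e$ equals the signed sum of the Dehn relations along $\partial D_e=C_e$: every edge interior to $D_e$ lies on the boundary of two enclosed faces and its relation enters with opposite signs, so these contributions cancel and only the boundary edges survive. Since all edges of $C_e$ other than $e$ belong to $T^*$, rearranging this identity exhibits the Dehn relation at $e$ as a $\Z[x^{\pm1}]$-linear combination of relations in ${\cal R}_0$ and ${\cal R}'$, which is exactly the claim. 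It is enough to verify this at the level of the relation module used to compute $F(x)$; there the relations are additive and the cancellation is literal.

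The step I expect to be the main obstacle is the bookkeeping forced by the two decorations that the bare planar picture does not see: the orientations and signs of the Dehn relations, and the monodromy powers $x^\nu$ carried by the periodic families $a_\nu=x^\nu a x^{-\nu}$. Both cancellations above --- the telescoping of white generators that produces ${\cal R}'$ and the cancellation of interior edges in Green's theorem --- must be checked to respect these weights simultaneously, and fixing a consistent sign and coboundary convention along $C_e$ is the delicate part. A related point is the periodicity itself: in the annular quotient $\o G^*$ exactly one non-tree edge has a fundamental cycle that winds around the annulus and hence bounds no disk there, so Green's theorem fails for it. Passing to the simply connected strip, where every finite cycle bounds, removes this difficulty and is the reason the lemma is phrased for the infinite dual graph $G^*$; the winding relation is then recovered, $\Z$-equivariantly, from the full $\Z$-family of face relations together with the two unbounded regions $0$ and $u$.
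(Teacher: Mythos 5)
Your proposal is correct and is essentially the paper's own argument: the paper likewise forms the fundamental cycle of the non-tree edge in $T^*$ and shows that the sum of the Dehn relations along it equals the sum of the Laplacian relations at the enclosed vertices of $G$, with shared/interior edges cancelling in pairs. The only difference is organizational --- the paper establishes this identity by induction on the number of enclosed vertices (splitting the cycle into two subcycles with common edges cancelling), whereas you sum the face relations over all enclosed faces at once; your ``discrete Green's theorem'' is precisely what that induction proves.
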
 

\begin{proof} Any Dehn relation in ${\cal R} \setminus {\cal R}_0$ corresponds to an edge of $G^*$ not contained in $T^*$. Adding the edge to $T^*$ results in a unique cycle $\g$. We proceed by induction on the number of vertices of $G$ enclosed by $\g$. 

If $\g$ encloses a single vertex, then, as in the proof of Theorem \ref{alex}, the relations corresponding to the edges of $\g$ add together to give the Laplacian relation at the vertex.

If $\g$ encloses more than one vertex, then it can be decomposed as 
a union of cycles $\g_1$ and $\g_2$ with edges of $G$ in common such that each cycle encloses fewer vertices than $\g$. The sum of the Dehn relations corresponding to the edges of $\g$ is equal to the addition of the sums coming from $\g_1$ and $\g_2$, the relations  corresponding to common edges canceling in pairs. By the induction hypothesis the later is a consequence of ${\cal R}_0 \cup {\cal R'}$.

\end{proof}

Question \ref{question1} might be approached by reversing the process of transforming a plane 1-periodic graph $G$ to an encircled link $\hat \ell$. Given any link $\hat \ell$ with an unknotted component $C$,  the Mahler measure of $\Delta_{\hat \ell}(x, -1, \ldots, -1)$ is the Mahler measure of some 1-periodic plane graph. (Neither the extra factor of $x-1$ nor the orientation of $C$ will affect the Mahler measure.)  However, the classification of Alexander polynomials of links with an unknotted component is a difficult problem that has been only partly solved \cite{Ki79, Co82, Tr82}

\section{Unweighted $d$-periodic graphs} We present some results about unweighted $d$-periodic graphs. They will not surprise some experts, but, as far as we know, they have not appeared elsewhere. In particular, we will show that our restatement of Lehmer's question in Theorem \ref{LQ} is not valid if we restrict ourselves to unweighted graphs. \bigskip

{\sl Graphs considered in this section are unweighted. }\bigskip

We call the limit in Theorem \ref{limit} the {\it complexity growth rate} of $G$, and denote it by $\g_G$.  Its relationship to the {\it thermodynamic limit} or {\it bulk limit} defined for a wide class of unweighted lattice graphs is discussed in  \cite{LSW14}. 

Denote by $R= R(\La)$ a fundamental domain of $\La$. Let $G\vert_R$ be the full unweighted subgraph of $G$ on vertices $v_{i, \s},\ \s \in R$. We denote by $\ell_R$ the corresponding medial link. 

 If  $G\vert_R$ is connected for each $R$, then $\{\tau_{G_\La}\}$ and $\{\tau_{G\vert_R}\}$ have the same exponential growth rates. (See Theorem 7.10 of \cite{LSW14} for a short, elementary proof. A more general result is Corollary 3.8 of \cite{Ly05}.) The bulk limit is defined by $\g_G/|V(\o G)|$. 

\begin{example} The \emph{$d$-dimensional grid graph} $\Gr_d$ is the unweighted graph with  vertex set $\Z^d$ and edges 
from $(s_1, \ldots, s_d)$ to $(s_1', \ldots, s_d')$ if $|s_i -s_i'|=1$ and $s_j = s_j',\ j \ne i$, for every $1 \le i \le d$.  Its Laplacian polynomial is 
\begin{equation*} \De(\Gr_d) = 2d -x_1-x_1^{-1}- \cdots - x_d-x_d^{-1}. \end{equation*}
When $d=2$, it is a plane graph. The graphs links $\ell_R$ are indicated in Figure \ref{gridlinks} for $\La = \langle x_1^2, x_2^2 \rangle$ on left and $\La= \langle x_1^3, x_2^3 \rangle$ on right.

\begin{figure}
\begin{center}
\includegraphics[height=2 in]{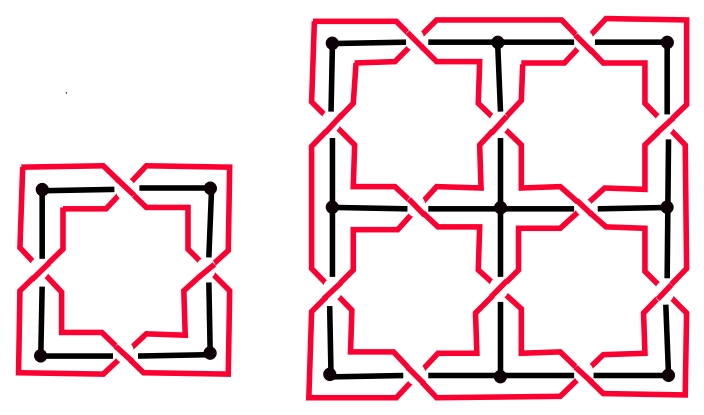}
\caption{Graphs $(\Gr_2)_R$ and associated links, $\La = \langle x_1^2, x_2^2 \rangle$ and $\langle x_1^3, x_2^3 \rangle$}
\label{gridlinks}
\end{center}
\end{figure}

\end{example}

The {\it determinant} of a link $\ell$, denoted here by ${\rm det}(\ell)$, is the absolute value of its 1-variable Alexander polynomial evaluated at $-1$.   
It follows from the Mayberry-Mott theorem \cite{BM54} that if $\ell$ is an alternating link that arises by the medial construction from a finite plane graph, edge weights $\pm 1$ allowed, then ${\rm det}(\ell)$ is equal to the tree complexity of the graph (see Appendix A.4 in \cite{BZ85}).
The following corollary is an immediate consequence of Theorem \ref{limit}.  It has been proven independently by Champanerkar and Kofman \cite{CK16}.

\begin{cor}\label{detgrowth} Let $G$ be a connected $d$-periodic unweighted plane graph, $d =1$ or $2$. Then $$\lim_{\langle \La \rangle \to \infty}  \frac{1}{\vert \Z^d/\La\vert} \log {\rm det}(\ell_R) = 
\g_{\De_G}.$$ \end{cor}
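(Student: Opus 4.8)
The plan is to reduce Corollary \ref{detgrowth} to Theorem \ref{limit} by identifying the two quantities appearing in each statement. The key observation is that both sides of the claimed equation are exponential growth rates, so it suffices to match them term-by-term along the sequence of sublattices $\La$. First I would invoke the Mayberry--Mott theorem as quoted just before the statement: since $G$ is unweighted and plane, each finite subgraph $G\vert_R$ is an unweighted plane graph, and its associated medial link $\ell_R$ is alternating. Therefore ${\rm det}(\ell_R) = \t_{G\vert_R}$, the tree complexity of $G\vert_R$. This converts the left-hand side of the corollary into a statement purely about tree complexities of the finite subgraphs $G\vert_R$.

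The second step is to replace $\t_{G\vert_R}$ by $\t_{G_\La}$. The excerpt supplies exactly what is needed: provided $G\vert_R$ is connected for each $R$, the sequences $\{\t_{G_\La}\}$ and $\{\t_{G\vert_R}\}$ have the same exponential growth rate (Theorem 7.10 of \cite{LSW14}). Here I must address the connectedness hypothesis. Since $G$ is assumed connected and $d$-periodic with $d=1$ or $2$, one expects $G\vert_R$ to be connected once the fundamental domain $R$ is large enough (more precisely, once $\langle \La\rangle$ is large), so the growth-rate comparison applies in the limit $\langle\La\rangle \to \infty$. Combining the two steps yields
\begin{equation*}
\lim_{\langle \La \rangle \to \infty} \frac{1}{|\Z^d/\La|} \log {\rm det}(\ell_R)
= \lim_{\langle \La \rangle \to \infty} \frac{1}{|\Z^d/\La|} \log \t_{G\vert_R}
= \lim_{\langle \La \rangle \to \infty} \frac{1}{|\Z^d/\La|} \log \t_{G_\La}.
\end{equation*}

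The final step is to recognize the rightmost expression as the complexity growth rate $\g_{\De_G}$. Because $G$ is unweighted, Remark \ref{remarks}(2) gives $\k_{G_\La} = \t_{G_\La}$ for every $\La$, and then Theorem \ref{limit} identifies the limit of $\frac{1}{|\Z^d/\La|}\log\k_{G_\La}$ with $\log M(\De_G) = \g_{\De_G}$. Here I should check that $\De_G$ is not identically zero so that Theorem \ref{limit} applies; this follows from Proposition \ref{zeropoly}, since a connected $G$ has no closed component. Chaining the equalities completes the proof.

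The main obstacle I anticipate is not any single hard estimate but rather the bookkeeping around the connectedness of $G\vert_R$ and the precise hypotheses under which the growth-rate equivalence of $\{\t_{G\vert_R}\}$ and $\{\t_{G_\La}\}$ holds. One must ensure that the cited comparison (Theorem 7.10 of \cite{LSW14}) is applicable uniformly as $\langle\La\rangle\to\infty$, and that the boundary effects distinguishing the free subgraph $G\vert_R$ from the quotient $G_\La$ genuinely contribute only subexponentially. Everything else is a direct substitution of results already established in the excerpt, so the corollary is indeed, as claimed, an immediate consequence of Theorem \ref{limit}.
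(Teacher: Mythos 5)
Your proposal is correct and follows exactly the route the paper intends: the paper offers no separate argument, calling the corollary an ``immediate consequence'' of Theorem \ref{limit}, with the Mayberry--Mott identification $\det(\ell_R)=\t_{G\vert_R}$, the growth-rate comparison of $\{\t_{G\vert_R}\}$ and $\{\t_{G_\La}\}$ from Theorem 7.10 of \cite{LSW14}, and the equality $\k_{G_\La}=\t_{G_\La}$ for unweighted graphs all stated in the surrounding text precisely so that they can be chained together as you do. Your explicit attention to the nonvanishing of $\De_G$ (via Proposition \ref{zeropoly}) and to the connectedness hypothesis on $G\vert_R$ is, if anything, more careful than the paper's own treatment.
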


\begin{remark} (1) In \cite{CKP15} the authors consider as well more general sequences of links.  When $G = \Gr_2$, their results imply that:

$$\lim_{\langle \La \rangle \to \infty} \frac{2 \pi}{c(\ell_R)} \log {\rm det}(\ell_R)=  v_{oct},$$
where $c(\ell_R)$ is the number of crossings of $\ell_R$ and $v_{oct} \approx 3.66386$ is the volume of the regular ideal octohedron.\bigskip

(2) If Question \ref{question1} has an affirmative answer then Lehmer's question becomes a question about link determinants. 

\end{remark}

Grid graphs are the simplest unweighted $d$-periodic graphs,  as the 
following theorem shows. 

\begin{theorem} \label{min} Assume that $G$ is an unweighted connected $d$-periodic graph.  Then $\g_G\ge \g_{\Gr_d}$. 
\end{theorem}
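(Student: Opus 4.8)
The plan is to work entirely through the identity $\g_G=\log M(\De_G)$ furnished by Theorem~\ref{limit} (valid since $G$ is connected and unweighted, so $\De_G\not\equiv 0$ by Proposition~\ref{zeropoly}), and to bound $M(\De_G)$ below by producing a simple comparison polynomial that $\De_G$ dominates pointwise on the torus and whose Mahler measure is exactly $M(\De_{\Gr_d})$. The starting observation is that connectivity of $G$ is equivalent to surjectivity of the monodromy homomorphism $H_1(\o G;\Z)\to\Z^d$. Concretely, I fix a spanning tree $T$ of $\o G$; the fundamental cycles $c_1,\dots,c_k$ cut out by the non-tree edges $e_1,\dots,e_k$ form a basis of $H_1(\o G;\Z)$, so their monodromies $\mathbf a_1,\dots,\mathbf a_k\in\Z^d$ generate $\Z^d$ (some $\mathbf a_j$ may be $0$; those will simply be discarded).

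Next I extract favorable terms from the cycle-rooted spanning forest expansion of Proposition~\ref{poly}. Since $G$ is unweighted, each summand is $\prod_{\text{cycles}}(2-w-w^{-1})$ with every $w$ a monomial, and on the torus $x_j=e^{2\pi i\theta_j}$ each factor equals $2-2\cos(2\pi\langle\mathbf a,\theta\rangle)\ge 0$; hence every summand is nonnegative there. For each non-tree edge $e_j$, the subgraph $F_j=T\cup\{e_j\}$ is a connected spanning CRSF whose unique cycle is $c_j$, contributing exactly $2-x^{\mathbf a_j}-x^{-\mathbf a_j}$. Discarding all remaining (nonnegative) summands, and then discarding the extra nonnegative terms beyond a $\Q$-linearly independent subset $\mathbf a_1,\dots,\mathbf a_d$ (which exists because the $\mathbf a_j$ generate the rank-$d$ lattice $\Z^d$), I obtain pointwise on $\T^d$
\[
\De_G\ \ge\ \sum_{j=1}^{k}\bigl(2-x^{\mathbf a_j}-x^{-\mathbf a_j}\bigr)\ \ge\ \sum_{i=1}^{d}\bigl(2-x^{\mathbf a_i}-x^{-\mathbf a_i}\bigr)\ =:\ P_0.
\]

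It remains to identify $M(P_0)$. Let $A$ be the integer $d\times d$ matrix with rows $\mathbf a_1,\dots,\mathbf a_d$, so $\det A\ne 0$. Then $P_0$ is obtained from $\De_{\Gr_d}=\sum_{i}(2-x_i-x_i^{-1})$ by the monomial substitution $x_i\mapsto x^{\mathbf a_i}$; equivalently, at the point $\theta$ its value is that of $\De_{\Gr_d}$ at $A\theta$. The endomorphism $\theta\mapsto A\theta$ of $\T^d=\R^d/\Z^d$ is surjective (because $A$ is invertible over $\R$ and maps $\Z^d$ into itself) and therefore preserves Haar measure, so $\int_{\T^d}\log P_0\,d\theta=\int_{\T^d}\log\De_{\Gr_d}\,d\theta$, i.e. $M(P_0)=M(\De_{\Gr_d})$; this is the natural extension of the unimodular change-of-variables invariance recorded after Definition~\ref{mahler}. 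Since $0\le P_0\le\De_G$ almost everywhere on $\T^d$ and the Mahler integrals converge, $\int\log\De_G\ge\int\log P_0$, whence $M(\De_G)\ge M(\De_{\Gr_d})$ and $\g_G=\log M(\De_G)\ge\log M(\De_{\Gr_d})=\g_{\Gr_d}$.

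I expect the substantive points to be the two supporting facts rather than the skeleton. The first is the passage from connectivity of $G$ to the generation of $\Z^d$ by the fundamental-cycle monodromies (covering-space bookkeeping, handled cleanly because $G$ is the $\Z^d$-cover of $\o G$ classified by the monodromy map, and the possibly-vanishing $\mathbf a_j$ only delete zero summands). The second, and the real crux, is the measure-preserving substitution $M(f\circ A)=M(f)$ for integer $A$ with $\det A\ne 0$, which is precisely what pins $M(P_0)$ to $M(\De_{\Gr_d})$ exactly rather than up to an inequality. I also note that the pointwise domination step is robust only because $G$ is unweighted: for $\pm1$-weighted graphs the CRSF summands can be negative and the comparison collapses, in keeping with the unweighted hypothesis of the statement.
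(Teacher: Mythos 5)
Your proof is correct, but it takes a genuinely different route from the paper's. The paper argues combinatorially: it first contracts edge orbits to reduce to a graph $G'$ with a single vertex orbit, then (using connectivity exactly where you do) selects $d$ edge orbits whose translation vectors $u_1,\dots,u_d$ generate a finite-index subgroup of $\Z^d$, deletes the rest, and invokes Lemma \ref{new} to identify the growth rate of the resulting graph --- a disjoint union of translates of a copy of $\Gr_d$ that is periodic only with respect to the sublattice $\langle u_1,\dots,u_d\rangle$ --- with $\g_{\Gr_d}$. Both reductions rest on the (implicit) monotonicity of spanning-tree counts under edge deletion and contraction, which is where unweightedness enters for the paper. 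Your argument instead stays at the level of the Laplacian polynomial: unweightedness enters through nonnegativity of the CRSF summands on $\T^d$, which licenses discarding all summands except those of the forests $T\cup\{e_j\}$, and the role of Lemma \ref{new} is played by the Haar-measure-preservation of the surjective endomorphism $\theta\mapsto A\theta$ of $\T^d$, giving $M(\De_{\Gr_d}\circ A)=M(\De_{\Gr_d})$ for nonsingular integer $A$. These two ingredients are in fact counterparts of each other: a finite-index sublattice on the graph side corresponds to a finite-to-one torus endomorphism on the dual side, so your key analytic fact is essentially the Mahler-measure shadow of the paper's Lemma \ref{new}. What your approach buys: it treats any number of vertex orbits in a single step (no contraction argument), it makes the deletion/contraction monotonicity facts unnecessary rather than implicit, and it isolates connectivity as precisely the surjectivity of the monodromy map $H_1(\o G;\Z)\to\Z^d$. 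What the paper's approach buys: it is more elementary (no measure-theoretic substitution lemma), and it reuses Lemma \ref{new}, which is needed in that section anyway for Lemma \ref{nonincreasing} and Corollary \ref{absolute}. The supporting facts you flag --- generation of $\Z^d$ by fundamental-cycle monodromies, the CRSF contribution $2-x^{\mathbf a_j}-x^{-\mathbf a_j}$ of $T\cup\{e_j\}$ (valid also when $e_j$ projects to a loop of $\o G$), and invariance of Mahler measure under injective monomial substitutions via pushforward of Haar measure --- are all correct as stated.
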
 

\begin{remark} The conclusion of Theorem \ref{min} does not hold without the hypothesis that $G$ is connected. Consider the 2-periodic graph $G$ consisting of 
countably many copies of $\Gr_1$ obtained from $\Gr_2$ by removing all vertical edges. Then $\g_G =0$ while $\g_{\Gr_2} > 0$. \end{remark}

The following lemma, needed for the proof of Corollary \ref{absolute}, is of independent interest. 

\begin{lemma} \label{nonincreasing} The sequence of complexity growth rates $\g_{\De_{\Gr_d}}$ is nondecreasing. 
\end{lemma}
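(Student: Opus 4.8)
The plan is to reduce the statement to a monotonicity property of the Mahler measure, exploiting the explicit form of the grid graph's Laplacian polynomial. By Theorem \ref{limit} the complexity growth rate is $\g_{\Gr_d} = \log M(\De(\Gr_d))$, and since $\log$ is increasing it suffices to show $M(\De(\Gr_{d+1})) \ge M(\De(\Gr_d))$, i.e. to compare consecutive terms.

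First I would evaluate $\De(\Gr_d)$ on the torus $\{|x_i| = 1\}$. Writing $x_j = e^{2\pi i \theta_j}$ gives $x_j + x_j^{-1} = 2\cos(2\pi\theta_j)$, so
$$\De(\Gr_d)(e^{2\pi i \theta_1}, \ldots, e^{2\pi i \theta_d}) = \sum_{j=1}^d \bigl(2 - 2\cos(2\pi\theta_j)\bigr) = 4\sum_{j=1}^d \sin^2(\pi\theta_j) \ge 0.$$
The point of this computation is twofold: $\De(\Gr_d)$ is nonnegative on the torus, and passing from $d$ to $d+1$ merely appends one more nonnegative summand. Concretely, on the torus
$$\De(\Gr_{d+1}) = \De(\Gr_d) + 4\sin^2(\pi\theta_{d+1}) \ge \De(\Gr_d).$$

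Next I would compute $\log M(\De(\Gr_{d+1}))$ by Fubini, integrating out the last variable first. For each fixed $(\theta_1, \ldots, \theta_d)$, monotonicity of $\log$ and the displayed inequality give
$$\int_0^1 \log\bigl(\De(\Gr_d) + 4\sin^2(\pi\theta_{d+1})\bigr)\, d\theta_{d+1} \ge \int_0^1 \log \De(\Gr_d)\, d\theta_{d+1} = \log \De(\Gr_d),$$
the last equality holding because the integrand no longer depends on $\theta_{d+1}$. Integrating this pointwise inequality over $(\theta_1, \ldots, \theta_d) \in [0,1]^d$, and using $\De(\Gr_d) \ge 0$ on the torus (so $|\De(\Gr_d)| = \De(\Gr_d)$), yields $\log M(\De(\Gr_{d+1})) \ge \log M(\De(\Gr_d))$, which is the claim.

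The only delicate point is integrability: $\log \De(\Gr_d)$ has a logarithmic singularity at the single torus point $\theta_1 = \cdots = \theta_d = 0$, and I must ensure the manipulations are legitimate there. This is not a genuine obstacle. The integral defining $M(\De(\Gr_d))$ converges by the remark following Definition \ref{mahler}, and the pointwise inequality above holds trivially on the measure-zero singular set (there the right-hand side is $-\infty$ while the left-hand side stays finite), so no separate estimate near the singularity is required. The argument is thus a clean pointwise $\log$-monotonicity comparison rather than anything demanding cancellation of singular contributions.
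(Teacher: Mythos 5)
Your proof is correct, but it takes a genuinely different route from the paper. The paper's argument is purely graph-theoretic: delete from $\Gr_d$ all edges parallel to the $d$th coordinate axis, obtaining a $\Z^d$-invariant subgraph consisting of countably many disjoint translates of $\Gr_{d-1}$; by Lemma \ref{new} this subgraph has complexity growth rate $\g_{\Gr_{d-1}}$, and since deleting edge orbits cannot increase the complexity growth rate, $\g_{\Gr_{d-1}} \le \g_{\Gr_d}$. You instead work analytically through Theorem \ref{limit}: since $\De(\Gr_{d+1})$ restricted to the torus is $\De(\Gr_d) + 4\sin^2(\pi\theta_{d+1})$ and both polynomials are nonnegative there, the pointwise monotonicity of $\log$ integrates to $\log M(\De(\Gr_{d+1})) \ge \log M(\De(\Gr_d))$, with the singular set being measure zero and the Mahler integrals convergent. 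Your handling of the singularity is essentially right, though your parenthetical is slightly off: at the single point where \emph{all} coordinates vanish, both sides are $-\infty$, not just the right-hand side; the inequality still holds in the extended reals, so nothing breaks. What each approach buys: yours is self-contained and avoids both Lemma \ref{new} and the paper's implicit use of monotonicity of spanning-tree counts under passage to spanning subgraphs, but it leans on the special additive structure of $\De(\Gr_d)$ as a sum of nonnegative single-variable terms, so it is specific to grid-like polynomials; the paper's combinatorial argument needs no explicit formula and is of a piece with the edge-deletion and contraction techniques it reuses for Theorem \ref{min} and Corollary \ref{absolute}.
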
 

Doubling each edge of $\Gr_1$ results in a graph with Laplacian polynomial $2(x-2+x^{-1})$, which has Mahler measure $2 M (x-2+x^{-1}) = 2$. The following corollary states that this is minimum nonzero complexity growth rate. 

\begin{cor} \label{absolute} (Complexity Growth Rate Gap) Let $G$ be any unweighted $d$-periodic graph with Laplacian polynomial $\De_G$. If $\g_G \ne 0$, then 
$$\g_G \ge \log 2.$$
\end{cor}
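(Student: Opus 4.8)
The plan is to reduce the general statement to a comparison with the grid graph $\Gr_d$, exploiting the two preceding results. The Complexity Growth Rate Gap says that any unweighted $d$-periodic graph with $\g_G \ne 0$ satisfies $\g_G \ge \log 2$. Since the complexity growth rate is $\log M(\De_G)$ by Theorem \ref{limit}, this is equivalent to showing $M(\De_G) \ge 2$ whenever $M(\De_G) > 1$ (equivalently, whenever the Mahler measure is not $1$).

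First I would dispose of the disconnected case. If $G$ is disconnected, Proposition \ref{components} gives $\De_G = \De_{G_1} \cdots \De_{G_t}$ over the component orbits, and multiplicativity of Mahler measure (Remark, part (3)) gives $\g_G = \sum_i \g_{G_i}$. If $\g_G \ne 0$ then some component orbit $G_i$ has $\g_{G_i} \ne 0$; each component orbit is itself a connected $d'$-periodic graph for some $d' \le d$ (after restricting to the sublattice stabilizing a component). So it suffices to prove the bound for connected graphs, and then the disconnected case follows since $\g_G \ge \g_{G_i} \ge \log 2$.

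For the connected case, the key is to combine Theorem \ref{min} with Lemma \ref{nonincreasing}. Theorem \ref{min} gives $\g_G \ge \g_{\Gr_d}$ for any connected unweighted $d$-periodic $G$. Lemma \ref{nonincreasing} says the sequence $\g_{\Gr_d}$ is nondecreasing in $d$, so $\g_{\Gr_d} \ge \g_{\Gr_1}$ for all $d \ge 1$. The base case $\g_{\Gr_1}$ is then computed directly: $\Gr_1$ has Laplacian polynomial $\De_{\Gr_1} = 2 - x - x^{-1} = -(x - 2 + x^{-1})$, and by the $d=1$ Mahler measure formula (Remark, part (2)), or by observing that $x-2+x^{-1} = -x^{-1}(x-1)^2$ has its only root at $x=1$ on the unit circle, we get $M(\De_{\Gr_1}) = 1$, hence $\g_{\Gr_1} = 0$.

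This is the one subtle point: the chain $\g_G \ge \g_{\Gr_d} \ge \g_{\Gr_1} = 0$ only yields $\g_G \ge 0$, which is not strong enough. The gap of $\log 2$ does not come from $\Gr_1$ itself but must be extracted from the hypothesis $\g_G \ne 0$. I expect the \textbf{main obstacle} to be exactly this: upgrading the trivial bound $\g_G \ge 0$ to $\g_G \ge \log 2$ under the assumption of nonvanishing. The resolution should be a by-hand analysis of the polynomial $\De_G$ when $M(\De_G) > 1$, using the structure from Proposition \ref{poly}: $\De_G$ has nonnegative integer coefficients built from factors $(2 - w - w^{-1})$, so its value at the all-$1$s point, or its leading coefficient, is a positive integer; one then argues that if $M(\De_G) > 1$ the relevant integer content forces $M(\De_G) \ge 2$. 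Concretely, I would show that the doubled-edge graph with $\De = 2(x-2+x^{-1})$ and $M = 2$ is the extremal configuration, and that any graph whose Mahler measure exceeds $1$ either has an integer content $\ge 2$ (giving $M \ge 2$ by multiplicativity) or has a noncyclotomic factor whose own Mahler measure must be bounded below by comparison with the grid polynomials. Making this dichotomy precise — separating the cyclotomic part (which contributes $M = 1$) from a genuine integer factor of size at least $2$ — is where the real work lies.
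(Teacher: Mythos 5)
Your setup is fine — the reduction to connected graphs, and the observation that the chain $\g_G \ge \g_{\Gr_d} \ge \g_{\Gr_1} = 0$ is vacuous — but you stop exactly where the paper's proof begins, and your proposed way of closing the gap would not work. Two things are missing. First, a cheap observation disposes of every dimension except one: by Theorem \ref{min} and Lemma \ref{nonincreasing}, for $d \ge 2$ one has $\g_G \ge \g_{\Gr_d} \ge \g_{\Gr_2} \approx 1.165 > \log 2$, so monotonicity is used downward only to $\Gr_2$, and the whole difficulty is concentrated in $d=1$. Second, for $d=1$ the paper does not analyze $\De_G$ through its coefficients, content, or cyclotomic factors. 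It re-runs the edge-contraction procedure from the proof of Theorem \ref{min}: contracting edge orbits never increases the complexity growth rate; if an orbit of parallel edges ever appears one gets $\g_G \ge \log 2$ immediately; otherwise one reaches a graph $G'$ with a single vertex orbit and no loops, which cannot be $\Gr_1$ (else $G$ was a tree and $\g_G = 0$, contrary to hypothesis). After deleting excess edge orbits, $\De_{G'} = 4 - x^r - x^{-r} - x^s - x^{-s}$ with, WLOG, $r=1$. The key inequality $M(\De_{G'}) \ge 2$ is then an explicit estimate: the integrand of the Mahler measure integral is $\log\bigl(4\sin^2(\pi\theta) + 4\sin^2(\pi s\theta)\bigr) \ge \log\bigl(8\,\vert\sin(\pi\theta)\vert\,\vert\sin(\pi s\theta)\vert\bigr)$ by $u^2+v^2 \ge 2uv$, and since $\int_0^1 \log\vert\sin(\pi\theta)\vert\, d\theta = \int_0^1 \log\vert\sin(\pi s\theta)\vert\, d\theta = -\log 2$, one gets $\g_{G'} \ge 3\log 2 - 2\log 2 = \log 2$.

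The dichotomy you propose instead — either the integer content of $\De_G$ is at least $2$, or $\De_G$ has a noncyclotomic factor whose Mahler measure is "bounded below by comparison with the grid polynomials" — is unsupported and, as stated, question-begging. A general lower bound strictly above $1$ for Mahler measures of noncyclotomic integer polynomials is precisely Lehmer's question, which is open; and comparison with grid polynomials in the only remaining case $d=1$ yields $\g_{\Gr_1}=0$, i.e., exactly the trivial bound you already identified as insufficient. Nor can the structure visible in Proposition \ref{poly} alone (nonnegative CRSF expansion, value at $1$, leading coefficient) force the gap: the hypothesis of unweightedness has to enter in a stronger, more geometric way. In the paper it enters through the contraction argument, which pins $\De_{G'}$ down to the two-parameter family $4 - x^r - x^{-r} - x^s - x^{-s}$, where a hands-on integral estimate is available. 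Without that reduction, or a genuine substitute for it, your outline contains no proof of the inequality that is the entire content of the corollary.
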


Although $\De_{\Gr_d}$ is relatively simple, the task of computing its Mahler measure is not. It is well known and not difficult to see that $\g_{\Gr_d} \le \log 2d$. We will use a theorem of N. Alon \cite{Al90} to show that $\g_{\Gr_d}$ approaches $\log 2d$ asymptotically. 

\begin{theorem} \label{gridlim} (1) $\g_{\Gr_d} \le \log 2d$, for all $d \ge 1$.\\ 
(2) $\lim_{d \to \infty} \g_{\Gr_d}- \log 2d = 0.$ \end{theorem}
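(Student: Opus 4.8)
\textbf{Proof proposal for Theorem \ref{gridlim}.}

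The plan is to treat the two parts separately, using the integral definition of Mahler measure applied to $\De_{\Gr_d} = 2d - \sum_{i=1}^d (x_i + x_i^{-1})$. For part (1), I would evaluate the defining integral on the torus, writing each variable as $x_i = e^{2\pi i \theta_i}$ so that $x_i + x_i^{-1} = 2\cos(2\pi\theta_i)$ and hence $\De_{\Gr_d}(e^{2\pi i\theta_1},\ldots) = \sum_{i=1}^d (2 - 2\cos(2\pi\theta_i)) = \sum_i 4\sin^2(\pi\theta_i)$. Thus $\g_{\Gr_d} = \log M(\De_{\Gr_d}) = \int_{[0,1]^d} \log\!\big(\sum_{i=1}^d 4\sin^2(\pi\theta_i)\big)\, d\theta$. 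The simplest route to the bound $\g_{\Gr_d} \le \log 2d$ is to observe that the integrand is everywhere at most $\log(4d) $ is too weak; instead I would use concavity of $\log$ (Jensen's inequality): $\int \log(\cdot) \le \log \int (\cdot)$, and $\int_{[0,1]^d} \sum_i 4\sin^2(\pi\theta_i)\, d\theta = d \cdot 4 \cdot \tfrac12 = 2d$. This gives $\g_{\Gr_d} \le \log 2d$ immediately and cleanly.

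For part (2), the goal is to show the gap $\log 2d - \g_{\Gr_d}$ tends to $0$. Writing $\log 2d - \g_{\Gr_d} = \int_{[0,1]^d} \log\!\big(\tfrac{2d}{\sum_i 4\sin^2(\pi\theta_i)}\big)\, d\theta = -\int_{[0,1]^d} \log\!\big(\tfrac{1}{d}\sum_i 2\sin^2(\pi\theta_i)\big)\, d\theta$, I recognize the inner average $\tfrac1d \sum_i 2\sin^2(\pi\theta_i)$ as a sample mean of i.i.d.\ random variables $Y_i = 2\sin^2(\pi\theta_i)$ with $\mathbb{E}[Y_i] = 1$, where the $\theta_i$ are uniform on $[0,1]$. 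By the law of large numbers the average concentrates at $1$ almost surely, so the integrand $-\log(\text{average})$ tends to $0$ pointwise a.e.; the difficulty is to justify passing the limit inside the integral, since near the origin (all $\theta_i$ small) the integrand blows up logarithmically. Here I would invoke the theorem of Alon \cite{Al90} cited in the text: Alon's bound on the number of spanning trees (equivalently, a spectral/isoperimetric estimate controlling how much mass the Laplacian eigenvalues place near zero) furnishes exactly the uniform control needed to dominate the singular contribution and force the integral to zero as $d \to \infty$.

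The main obstacle is the uniform integrability near the singular locus $\{\theta : \sum_i \sin^2(\pi\theta_i) \text{ small}\}$. A purely probabilistic argument gives pointwise convergence but not the required control of the logarithmic singularity, whose size grows with $d$; this is precisely why the paper signals that Alon's theorem is needed rather than an elementary estimate. Concretely, I would bound the tail contribution $\int_{\{\text{average} < \delta\}} -\log(\text{average})\, d\theta$ by splitting into dyadic shells $\{2^{-(k+1)} \le \text{average} < 2^{-k}\}$ and using Alon's estimate to bound the measure of each shell, showing the sum over $k$ is $o(1)$ in $d$. The remaining region $\{\text{average} \ge \delta\}$ contributes at most $-\log\delta$ times a probability that, by concentration, can be made small; combined with the standard deviation estimate $\mathrm{Var}(\tfrac1d\sum Y_i) = O(1/d)$, the pointwise limit and the controlled tail together yield $\lim_{d\to\infty}(\log 2d - \g_{\Gr_d}) = 0$, completing the proof.
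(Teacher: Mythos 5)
Your part (1) is correct, and it takes a genuinely different route from the paper's. You apply Jensen's inequality to the concave function $\log$, using $\int_{[0,1]^d}\sum_{i=1}^d 4\sin^2(\pi\theta_i)\,d\theta_1\cdots d\theta_d = 2d$. The paper instead expands the logarithm of $1-\frac{1}{d}\sum_i\cos(2\pi\theta_i)$ in a power series, notes that odd powers integrate to zero by symmetry, and observes that every surviving term is nonpositive. Both arguments are valid; yours is shorter, while the paper's exhibits the deficit $\log 2d - \g_{\Gr_d}$ as an explicit series of nonnegative terms.

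Part (2), however, has a genuine gap. You correctly identify Alon's theorem as the essential external input, but the mechanism you propose for using it does not exist. Alon's theorem (Theorem \ref{lowerbound}) is a purely combinatorial statement: a lower bound $\t_G \ge [\rho(1-\epsilon(\rho))]^{|V(G)|}$ on the number of spanning trees of a finite connected $\rho$-regular graph. It says nothing about the Lebesgue measure of the dyadic shells $\{2^{-(k+1)} \le \frac{1}{d}\sum_i 2\sin^2(\pi\theta_i) < 2^{-k}\}$ in the torus, and there is no direct ``equivalence'' with a spectral estimate of that kind, so the step ``use Alon's estimate to bound the measure of each shell'' cannot be carried out as stated. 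The bridge the paper uses, and which your part (2) never invokes, is Theorem \ref{limit}: for every finite-index subgroup $\La < \Z^d$ the quotient $(\Gr_d)_\La$ is a finite connected $2d$-regular graph on $|\Z^d/\La|$ vertices, so Alon gives $\t_{(\Gr_d)_\La} \ge \bigl[2d(1-\mu(d))\bigr]^{|\Z^d/\La|}$ with $\mu(d)\to 0$; taking $\frac{1}{|\Z^d/\La|}\log$ of both sides and letting $\<\La\>\to\infty$, Theorem \ref{limit} identifies the left-hand limit with $\g_{\Gr_d}$, yielding $\g_{\Gr_d} \ge \log 2d + \log(1-\mu(d))$. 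Combined with your part (1), this squeezes the gap to zero with no analysis of the singular locus at all --- the whole point of the paper's argument is that the combinatorics lets you bypass the uniform-integrability problem rather than solve it.

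If you do want to stay on the analytic side, the uniform control you need is real but must come from a different source than Alon: a Laplace-transform (Chernoff) bound such as $P\bigl(\frac{1}{d}\sum_i 2\sin^2(\pi\theta_i)\le t\bigr)\le (C\sqrt{t}\,)^d$, which follows from the quadratic vanishing $2\sin^2(\pi\theta)\approx 2\pi^2\theta^2$ near $\theta=0$. That estimate makes the contribution of the region where the average is below a small $\delta$ vanish as $d\to\infty$, and together with your variance bound away from the singular set it would close the argument. But that inequality has to be proved; it does not follow from, and is not equivalent to, Alon's theorem.
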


Asymptotic results about the Mahler measure of certain families of polynomials have been obtained elsewhere. However, the graph theoretic methods that we employ to prove Theorem \ref{min} are different from techniques used previously. \bigskip

Now suppose $H$ is a subgraph of $G$ consisting of one or more connected components of $G$, such that the orbit of $H$ under $\Z^d$ is all of $G$.  Let 
$\Gamma < \Z^d$ be the stabilizer of $H$.  Then $\Gamma\cong \Z^{d'}$ for some $d'\le d$, and its action on $H$ can be regarded as a cofinite free action of $\Z^{d'}$.   Consider the limit $$\g_H=\lim_{\<\La\>\to\infty} \frac{1}{|\Gamma/\La|} \log \k_{H_\La}$$
where $\La$ ranges over finite-index subgroups of $\Gamma$.

\begin{lemma}\label{new} Under the above conditions we have $\g_G=\g_H$.
\end{lemma}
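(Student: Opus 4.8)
The plan is to reduce the lemma to a single exact identity relating the torsion complexities of the finite quotients of $G$ and of $H$, and then to compare the two limits. First I would record the orbit structure. Since the $\Z^d$-orbit of $H$ is all of $G$ and $\Gamma$ is the stabilizer of $H$, the graph $G$ is the disjoint union $\bigsqcup_{[\si]\in\Z^d/\Gamma}x^\si H$ of the distinct translates of $H$. Fix a finite-index subgroup $\La\le\Z^d$. Translation by $\La$ permutes these translates, and two translates are identified in $G_\La$ exactly when their indices lie in the same coset of $\Gamma+\La$, so the translates fall into $[\Z^d:\Gamma+\La]$ orbits. Because $\Z^d$ is abelian, the stabilizer in $\La$ of any translate $x^\si H$ is $\La\cap\Gamma$, so the image of each translate in $G_\La$ is a copy of $H/(\La\cap\Gamma)=H_{\La\cap\Gamma}$. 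Hence $G_\La$ is a disjoint union of $[\Z^d:\Gamma+\La]$ copies of $H_{\La\cap\Gamma}$. The Laplacian matrix of a disjoint union is block diagonal, so $\L_{G_\La}$ is a direct sum of copies of $\L_{H_{\La\cap\Gamma}}$; passing to torsion subgroups (whose orders multiply under direct sum) gives the exact identity $\k_{G_\La}=\big(\k_{H_{\La\cap\Gamma}}\big)^{[\Z^d:\Gamma+\La]}$.

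Next I would convert the index factor. By the second isomorphism theorem $\Gamma/(\Gamma\cap\La)\cong(\Gamma+\La)/\La$, whence $[\Z^d:\Gamma+\La]=|\Z^d/\La|\big/|\Gamma/(\Gamma\cap\La)|$. Substituting and taking logarithms yields, for every finite-index $\La\le\Z^d$,
\begin{equation*}
\frac{1}{|\Z^d/\La|}\log\k_{G_\La}=\frac{1}{|\Gamma/(\Gamma\cap\La)|}\log\k_{H_{\Gamma\cap\La}}.
\end{equation*}
This identity is the engine of the proof, and it requires no hypothesis on $\De_G$; in particular it already handles the degenerate (unweighted closed-component) case $\De_G=\De_H=0$, where the two growth rates need not be Mahler measures.

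When $\De_G\ne0$ I would finish cleanly through Mahler measure, bypassing any sublattice bookkeeping. The same observation — every edge incident to a vertex of $H$ stays within $H$ — shows that, over $\Rd$, the matrix $L_G$ is literally the image of $L_H$ under the injection $\Z[\Gamma]\hookrightarrow\Rd$ carrying a basis of $\Gamma\cong\Z^{d'}$ to the corresponding monomials in $x_1,\dots,x_d$; therefore $\De_G$ is $\De_H$ after this substitution of multiplicatively independent monomials. Mahler measure is invariant under such a substitution — the induced homomorphism of tori $\T^d\to\T^{d'}$ is surjective and pushes Haar measure to Haar measure, so the two defining integrals coincide (this is the injective analogue of the change-of-basis invariance noted after Definition \ref{mahler}) — hence $M(\De_G)=M(\De_H)$. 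Theorem \ref{limit}, applied to $G$ over $\Z^d$ and to $H$ over $\Gamma$, then gives $\g_G=\log M(\De_G)=\log M(\De_H)=\g_H$.

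Finally, for a self-contained argument valid in all cases one can instead finish from the displayed identity. The inequality $\g_G\le\g_H$ is immediate: as $\langle\La\rangle\to\infty$ the subgroups $\La\cap\Gamma$ are finite-index in $\Gamma$ with $\langle\La\cap\Gamma\rangle\ge\langle\La\rangle\to\infty$, so the right-hand side ranges over a subfamily of the terms defining $\g_H$, and the $\limsup$ over a subfamily is no larger. The reverse inequality is the main obstacle: given a sequence $\La'_k\le\Gamma$ that realizes $\g_H$ with $\langle\La'_k\rangle\to\infty$, I must produce $\La_k\le\Z^d$ with $\Gamma\cap\La_k=\La'_k$ and $\langle\La_k\rangle\to\infty$. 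Since $\Gamma/\La'_k$ is a finite subgroup of $\Z^d/\La'_k$, splitting off a complementary free subgroup of large index produces a $\La_k$ with $\Gamma\cap\La_k=\La'_k$; the delicate part is choosing this complement so that the vectors of $\La_k$ transverse to $\Gamma$ are also long, so that $\langle\La_k\rangle\to\infty$ rather than being capped by a fixed transverse vector. The Mahler-measure route above is attractive precisely because it avoids this norm-control issue altogether.
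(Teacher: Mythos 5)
Your first half coincides with the paper's own argument: the paper likewise decomposes $G_\La$ into $k=|\Z^d/\La|\,/\,|\Gamma/(\La\cap\Gamma)|$ mutually disjoint translates of $H_{\La\cap\Gamma}$ and derives exactly your identity $\frac{1}{|\Z^d/\La|}\log\k_{G_\La}=\frac{1}{|\Gamma/(\La\cap\Gamma)|}\log\k_{H_{\La\cap\Gamma}}$. Where you genuinely diverge is the finish. The paper concludes in one line (``since $\langle\La\cap\Gamma\rangle\to\infty$ as $\langle\La\rangle\to\infty$, we have $\g_G=\g_H$''), which, read literally, gives only $\g_G\le\g_H$: as you correctly point out, the reverse inequality needs, for each finite-index $\La'\le\Gamma$ with $\langle\La'\rangle$ large, some $\La\le\Z^d$ with $\La\cap\Gamma=\La'$ and $\langle\La\rangle$ large. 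That construction does work --- take the saturation $\tilde\Gamma$ of $\Gamma$ in $\Z^d$, a complement $K$ with $\Z^d=\tilde\Gamma\oplus K$, and $\La=\La'\oplus MK$ with $M=\langle\La'\rangle$; one checks $\La\cap\Gamma=\La'$, and since the real spans of $\tilde\Gamma$ and $K$ meet at a positive angle, $\langle\La\rangle\ge c\,\min(\langle\La'\rangle,M)\to\infty$ --- but the paper does not supply it. Your Mahler-measure finish (observing $L_G$ is $L_H$ under an injective monomial substitution, so $M(\De_G)=M(\De_H)$ because the induced epimorphism $\T^d\to\T^{d'}$ pushes Haar measure to Haar measure, then applying Theorem \ref{limit} twice) is correct and bypasses the lattice bookkeeping entirely; its only cost is the hypothesis $\De_G\ne 0$, which holds in every application the paper makes of the lemma and is anyway needed for Theorem \ref{limit}. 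One caveat applies equally to you and to the paper: the disjointness of distinct translates of $H$ is automatic when $H$ is connected, but when $H$ is a union of several components it is an extra assumption (if $G$ is a single $\Z$-orbit of finite components $C_i$ and $H=C_0\cup C_1$, the stated hypotheses hold, translates overlap, and the conclusion fails), so both proofs implicitly require translates of $H$ to be pairwise disjoint or equal.
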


\begin{proof}
Let $\La$ be any finite-index subgroup of $\Z^d$.  Then $H$ is invariant under $\La\cap\G$. The image of $H$ in the quotient graph $G_\La$ is isomorphic to  $H_{\La\cap\G}$.  

Note that the quotient $\overline H$ of $H$ by the action of $\G$ is isomorphic to $\overline G$, since the $\Z^d$ orbit of $H$ is all of $G$.  Since $G_\La$ is a $|\Z^d /\La|$-fold cover of $\overline G$ and $H_{\La\cap\G}$ is a $|\G/(\La\cap\G)|$-fold cover of $\overline H$, $G_\La$ comprises $k=|\Z^d /\La| / |\G/(\La\cap\G)|$ mutually disjoint translates of a graph that is isomorphic to $H_{\La\cap\G}$. Hence $\k_{G_\La}=\k_{H_{\La\cap\G}}^k$ and
$$\frac{1}{|\Z^d /\La|} \log \k_{G_\La}= \frac{1} {|\G/(\La\cap\G)|} \log \k_{H_{\La\cap\G}}.$$
Since $\<\La\cap\G\>\to\infty$ as $\<\La\>\to\infty$, we have $\g_G=\g_H$.
\end{proof}

\noindent {\it Proof of Theorem \ref{min}.}  
Consider the case in which $G$ has a single vertex orbit. Then for some $u_1,\ldots, u_m\in \Z^d$, with $m \ge d$, the edge set $E(G)$ consists of edges from $v$  to $u_i \cdot v$ for each $v\in V$ and $i=1,\ldots,m$.  Since $G$ is connected, we can assume after relabeling that $u_1, \ldots, u_d$ generate a finite-index subgroup of $\Z^d$. Let $G'$ be the be the $\Z^d$-invariant subgraph of $G$ with edges from $v$  to $u_i \cdot v$ for each $v\in V$ and $i=1,\ldots,d$.  Then $G'$ is the orbit of a subgraph of $G$ that is isomorphic to $\Gr_d$, and so by Lemma \ref{new}, $\g(\Gr_d)=\g(G')\le\g(G)$.

We now consider a connected graph $G$ having vertex families $v_{1, \s}, \ldots, v_{n, \s}$, where $n >1$. Since $G$ is connected, there exists an edge $e$ joining $v_{1, {\bf 0}}$ to some $v_{2, \s}$. Contract the edge orbit $\Z^d \cdot e$ to obtain a new graph $G'$ having cofinite free $\Z^d$-symmetry and complexity growth rate no greater than that of $G$.
Repeat the procedure with the remaining vertex families so that only $v_{1, \s}$ remains. The proof in the previous case of a graph with a single vertex orbit now applies. \qed \\


\noindent {\it Proof of Lemma \ref{nonincreasing}.}  Consider the grid graph $\Gr_d$. Deleting all edges 
in parallel to the $d$th coordinate axis yields a subgraph $G$ consisting of countably many mutually disjoint translates of $\Gr_{d-1}$. By Lemma \ref{new},   $\g_{\Gr_{d-1}}=\g_G  \le \g_{\Gr_d}$. 
\qed \\

\noindent {\it Proof of Corollary \ref{absolute}.}  By Lemma \ref{new} it suffices to consider a connected $d$-periodic graph $G$ with $\g_G$ nonzero. Note that $\g_{\Gr_1} =0$ while $\g_{\Gr_2} \approx 1.165$ is greater than $\log 2$.  By Theorem \ref{min} and Lemma \ref{nonincreasing} we can  assume that $d=1$. 

If $G$ has an orbit of parallel edges, we see easily that $\g_G \ge \log 2$.  Otherwise, we proceed as in the proof of  Theorem \ref{min}, contracting edge orbits to reduce the number of vertex orbits  without increasing the complexity growth rate.  If at any step we obtain an orbit of parallel edges, we are done; otherwise we will obtain a graph $G'$ with a single vertex orbit and no loops. 
If $G'$ is isomorphic to $\Gr_1$ then $G$ must be a tree; but then $\g_G=0$, contrary to our hypothesis. So $G'$ must have at least two edge orbits.  
Deleting excess edges, we may suppose $G'$ has exactly two edge orbits. 

The Laplacian polynomial $\De_{G'}$ has the form $4-x^r-x^{-r}- x^s-x^{-s}$, for some positive integers $r, s$. Reordering the vertex set of $G'$, we can assume without loss of generality that $r=1$. The following calculation is based on an idea suggested to us by Matilde Lalin.

$$\log M(\De_{G'}) = \int_0^1 \log \vert 4- 2 \cos( 2 \pi \theta)-2 \cos(2 \pi s \theta) \vert \ d\theta$$

$$=\int_0^1 \log \vert  2(1-\cos  (2 \pi \theta)) + 2(1 - \cos(2 \pi s \theta)) \vert \ d\theta$$

$$=\int_0^1 \log  \bigg( 4\sin^2(\pi \theta) + 4\sin^2(\pi s \theta) \bigg) \ d\theta.$$

\ni Using the inequality $(u^2+v^2) \ge 2 u v$, for any nonnegative $u, v$, we have:

$$\log M(\De_{G'}) \ge \int_0^1 \log \bigg( 8 \vert \sin( \pi \theta)\vert\  \vert \sin( \pi s \theta) \vert \bigg)\ d\theta$$

$$= \log 8 + \int_0^1 \log \vert \sin(  \pi \theta) \vert \ d\theta + \int_0^1 \log \vert \sin( \pi s \theta) \vert \ d\theta$$

$$= \log 8 + \int_0^1 \log \sqrt{\frac{1-\cos(2 \pi \theta)}{2}}\ d \theta + \int_0^1 \log \sqrt{\frac{1-\cos(2  \pi s\theta)}{2}}\ d \theta$$

$$=\log 8 + \int_0^1 \frac{1}{2} \log \bigg(\frac{2 - 2 \cos(2 \pi \theta)}{4}\bigg)\ d \theta +   \int_0^1 \frac{1}{2} \log \bigg(\frac{2 - 2 \cos(2 \pi s \theta)}{4}\bigg)\ d \theta$$

$$= \log 8 + \frac{1}{2}m(2 - x -x^{-1}) -\frac{1}{2} \log 4 + \frac{1}{2}m(2 - x^s - x^{-s})- \frac{1}{2} \log 4$$

$$= 3\log 2 + 0 - \log 2 +0 -\log 2 = \log 2.$$
\qed\\

Our proof of Theorem \ref{gridlim} depends on the following result of Alon. 

\begin{theorem}\label{lowerbound}\cite{Al90}   If $G$ is a finite connected $\rho$-regular unweighted graph, then 
$$\t_G \ge [\rho(1-\epsilon(\rho))]^{|V(G)|},$$ 
where $\epsilon(\rho)$ is a nonnegative function with $\epsilon(\rho)\to\infty$ as $\rho\to\infty$.
\end{theorem}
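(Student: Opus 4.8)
The statement is N.~Alon's theorem, so what I would present is the route I would take to reprove it, noting first that the bound is non-vacuous---and is what Theorem~\ref{gridlim}(2) needs---only under the reading $\epsilon(\rho)\to 0$ as $\rho\to\infty$ (with $\epsilon(\rho)\to\infty$ the right-hand side is eventually negative and the inequality is trivial). I prove that version.

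\emph{Reduction via the Matrix--Tree Theorem.} Since $G$ is connected and unweighted, $\t_G$ is the number of spanning trees, so by \cite{Tu84} we have $\t_G=\frac1n\prod_{i=2}^n\mu_i$, where $n=|V(G)|$ and $0=\mu_1<\mu_2\le\cdots\le\mu_n$ are the eigenvalues of the integer Laplacian $\rho I-A$. Writing $\rho=\lambda_1>\lambda_2\ge\cdots\ge\lambda_n\ge-\rho$ for the adjacency eigenvalues, $\mu_i=\rho-\lambda_i$, and the claim $\t_G\ge[\rho(1-\epsilon(\rho))]^n$ becomes, after taking logarithms, the lower bound
$$\frac1n\sum_{i=2}^n\log(\rho-\lambda_i)\ \ge\ \log\rho+\log\bigl(1-\epsilon(\rho)\bigr),$$
up to the negligible term $-\tfrac1n\log n$. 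Thus it suffices to bound the \emph{entropy correction} $\frac1n\sum_{i=2}^n\log(1-\lambda_i/\rho)$ from below by a quantity tending to $0$ as $\rho\to\infty$, uniformly over all such $G$.

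\emph{Spectral moments drive the gap.} The mechanism forcing $\epsilon(\rho)\to 0$ is spectral concentration. As $\mathrm{tr}(A)=0$ and (for simple $\rho$-regular $G$) $\mathrm{tr}(A^2)=2|E(G)|=n\rho$, the normalized eigenvalues $x_i=\lambda_i/\rho$ obey $\sum_i x_i=0$ and $\frac1n\sum_i x_i^2=\frac1\rho$. Hence the bulk of the $x_i$ lies within $O(\rho^{-1/2})$ of $0$, where $\log(1-x_i)=-x_i+O(x_i^2)$ averages to $O(1/\rho)$. Equivalently, in terms of the return probabilities $\bar p_k=\frac1n\mathrm{tr}\bigl((A/\rho)^k\bigr)$ of the simple random walk, the second-moment estimate gives the uniform bound $\bar p_k\le\frac1n+\frac1\rho$ for every $k\ge2$ (while $\bar p_1=0$ since $G$ is loopless); these feed the expansion $\frac1n\sum_{i\ge2}\log(1-x_i)=-\sum_{k\ge1}\frac1k\bigl(\bar p_k-\frac1n\bigr)$, valid with care about convergence at the edge $x=-1$.

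\emph{Where the difficulty sits.} The \textbf{main obstacle} is concentrated entirely at the two spectral edges. Near $x=+1$ (adjacency eigenvalues near $\rho$, i.e.\ small Laplacian eigenvalues $\mu_i$) the integrand $\log(1-x)$ diverges; the second-moment bound shows at most $O(n/\rho)$ of the $x_i$ exceed $\tfrac12$, but a term-by-term estimate using only connectivity ($\mu_2\gtrsim n^{-2}$, hence $\log(1-x_i)\gtrsim-\log n$) is too weak, since $O(n/\rho)$ such terms contribute $O(\rho^{-1}\log n)$, which does not vanish as $n\to\infty$ for fixed $\rho$. What is really required is a \emph{collective} lower bound on $\prod_{i:\,x_i\ \mathrm{near}\ 1}\mu_i$, using that small Laplacian eigenvalues arise from near-balanced vertex cuts and cannot all be simultaneously tiny in a $\rho$-regular graph; this is the technical heart, and the point where Alon's argument brings in its entropy/counting input. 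Symmetrically, near $x=-1$ (near-bipartite graphs) the series converges only conditionally, so the alternating cancellation between consecutive $k$ must be retained rather than bounded term-wise.

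\emph{Plan to finish.} I would therefore (i) split the spectrum into a bulk $|x_i|\le1-\delta$ and two edge sets; (ii) on the bulk use the quadratic estimate governed by $\frac1n\sum x_i^2=1/\rho$; (iii) for $x_i$ near $+1$, replace the term-wise bound by the collective product estimate above, reinterpreting $\prod\mu_i$ through the Matrix--Tree Theorem as a count of spanning forests of principal submatrices; and (iv) for $x_i$ near $-1$, pass to the bipartite double cover or group the series in consecutive pairs to capture the cancellation. Step (iii) is where I expect the genuine work to lie; granting that bound, the remaining pieces assemble to give $\epsilon(\rho)=O\!\left(\frac{\log\rho}{\rho}\right)\to0$, which is precisely the asymptotic strength that Theorem~\ref{gridlim}(2) requires.
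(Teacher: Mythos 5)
You were right to flag the misprint in the statement: as printed ($\epsilon(\rho)\to\infty$) the theorem is not even trivially true --- for even $|V(G)|$ the right-hand side $[\rho(1-\epsilon(\rho))]^{|V(G)|}$ would be a large positive number and the inequality generally false --- and the intended reading, which is Alon's theorem and is what the proof of Theorem \ref{gridlim}(2) actually invokes (there the function is $\mu(d)$ with $\mu(d)\to 0$), is indeed $\epsilon(\rho)\to 0$. Note, however, that the paper contains no proof of Theorem \ref{lowerbound} for yours to be compared against: it is quoted from \cite{Al90} and used as a black box. So your proposal must stand on its own, and it does not: it is a reduction plus a plan, with the decisive step explicitly ``granted.'' The parts you carry out are correct but routine --- the Matrix--Tree identity $\tau_G=\frac{1}{n}\prod_{i=2}^n\mu_i$, the trace bound $\frac{1}{n}\sum_i x_i^2=1/\rho$, and the bulk estimate; your step (iv) is even unnecessary, since for $x_i\le 0$ each term satisfies $\log(1-x_i)\ge 0$ and only helps a lower bound, so the edge at $x=-1$ needs no double covers or resummation.

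The genuine gap is your step (iii), and it is not a technicality: the collective lower bound on the product of the small Laplacian eigenvalues \emph{is} Alon's theorem, and everything else in your outline is bookkeeping. Your one concrete suggestion for it is moreover circular: since $\prod_{i\ge 2}\mu_i = n\,\tau_G$, ``reinterpreting $\prod\mu_i$ through the Matrix--Tree Theorem'' as a count of trees or forests is the original spanning-tree lower bound in disguise. That this step requires a real idea can be seen from the standard near-extremal family: take $m\approx n/(\rho+1)$ cliques $K_{\rho+1}$ arranged in a cycle, consecutive cliques joined by one edge (delete the edge between the two attachment vertices inside each clique to restore exact $\rho$-regularity). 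This graph is connected and $\rho$-regular, and its $m-1$ smallest nonzero Laplacian eigenvalues are approximately $\frac{1}{\rho+1}\bigl(2-2\cos(2\pi j/m)\bigr)$ for $j=1,\dots,m-1$, the least of order $1/(\rho m^2)$ --- polynomially small in $n$ for fixed $\rho$, so the termwise estimate loses a $\log n$ exactly as you feared; yet collectively, since $\prod_{j=1}^{m-1}\bigl(2-2\cos(2\pi j/m)\bigr)=m^2$, their product is about $m^2\rho^{-(m-1)}$, costing only $\Theta(\log\rho/\rho)$ per vertex. (Directly: $\tau_G\approx m\,(\rho+1)^{(\rho-1)m}$, so $\tau_G^{1/n}\approx\rho\,e^{-\Theta(\log\rho/\rho)}$, which also shows that your hoped-for rate $\epsilon(\rho)=O(\log\rho/\rho)$ would be of optimal order.) A proof must supply a uniform collective estimate of this kind valid for \emph{every} connected $\rho$-regular graph --- for instance a quantitative bound, at each threshold $t$, on how many Laplacian eigenvalues can lie below $t$, together with control of their product --- and until that lemma is actually proved rather than granted, your proposal establishes only the easy reduction, not the theorem.
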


\noindent{\it Proof of Theorem \ref{gridlim}.} (1) The integral representing the logarithm of the Mahler measure of $\De_{\Gr_d}$ can be written
$$\int_0^1 \cdots \int_0^1 \log \bigg\vert2d - \sum_{i=1}^d 2 \cos(2 \pi \theta_i)\bigg\vert d\theta_1\cdots d\theta_d$$
$$= \log 2d + \int_0^1 \cdots \int_0^1 \log \bigg\vert1+\sum_{i=1}^d \frac{ \cos( 2 \pi \theta_i)}{d}\bigg\vert d\theta_1\cdots d\theta_d$$
$$= \log 2d + \int_0^1 \cdots \int_0^1 -\sum_{k=1}^\infty \frac{(-1)^k}{k}\bigg( \frac{\sum_{i=1}^d \cos(2 \pi \theta_i)}{d} \bigg)^k d\theta_1\cdots d\theta_d.$$
By symmetry, odd powers of $k$ in the summation contribute zero to the integration. Hence 
$$\log(\De_{\Gr_d})= \log 2d - \int_0^1 \cdots \int_0^1 \sum_{k=1}^\infty \frac{1}{2k}\bigg( \frac{\sum_{i=1}^d \cos(2 \pi \theta_i)}{d} \bigg)^{2k} d\theta_1\cdots d\theta_d \le \log 2d.$$

(2) Let $\La$ be a finite-index subgroup of $\Z^d$. Consider the quotient graph $(\Gr_d)_\La$. The cardinality of its vertex set is $|\Z^d/\La|$. The main result of \cite{Al90}, cited above as Theorem \ref{lowerbound}, implies that 
$$\t_{(\Gr_d)_\La} = \bigg((2 d) (1 - \mu(d))\bigg)^{|\Z^d/\La|},$$
where $\mu$ is a nonnegative function such that $\lim_{d\to \infty} \mu(d) =0$.  
Hence 
$$\lim_{d \to \infty}  \bigg (\frac{1}{|\Z^d/\La|} \log \t_{(\Gr_d)_\La} - \log 2 d\bigg ) = \lim_{d\to \infty} \log (1-\mu(d)) =0.$$
Theorem \ref{limit} completes the proof.  
\qed \\

\begin{remark} One can evaluate $\log M(\De(\Gr_d))$ numerically and obtain an infinite series representing
$\g_{\Gr_d} - \log 2d$. However, showing rigorously that the sum of the series approaches zero as $d$ goes 
to infinity appears to be difficult. (See \cite{SW00}, p. 16 for a heuristic argument.) 
\end{remark}

\bigskip

\ni Department of Mathematics and Statistics,\\
\ni University of South Alabama\\ Mobile, AL 36688 USA\\
\ni Email: \\
\ni  silver@southalabama.edu\\
\ni swilliam@southalabama.edu
\end{document}